\newtheorem{prop}{Proposition}[section]
\newtheorem{thm}[prop]{Theorem}
\newtheorem{ddef}[prop]{Definition}
\theoremstyle{remark}
\DeclareMathAlphabet{\mathpzc}{OT1}{pzc}{m}{it}
\DeclareMathOperator{\md}{d}
\newcommand{\re}{{\mathbb R}}
\newcommand{\ce}{{\mathbb C}}
\newcommand{\ze}{{\mathbb Z}}
\newcommand{\mi}{\mathrm{i}}
\begin{document}
\title{Numerical Analysis for the Plateau Problem by the Method of Fundamental Solutions
}
\makeatletter
\renewcommand\@date{{%
  \vspace{-\baselineskip}%
  \large\centering
  \begin{tabular}{@{}c@{}}
    Koya Sakakibara\textsuperscript{1,2} \\
    \normalsize ksakaki@ous.ac.jp
  \end{tabular}%
  \quad and\quad
  \begin{tabular}{@{}c@{}}
    Yuuki Shimizu\textsuperscript{3} \\
    \normalsize yshimizu@g.ecc.u-tokyo.ac.jp
  \end{tabular}

  \bigskip

  \textsuperscript{1}Department of Applied Mathematics, Faculty of Science, Okayama University of Science\par
  \textsuperscript{2}RIKEN iTHEMS\par
  \textsuperscript{3}Graduate School of Mathematical Sciences, The University of Tokyo
}}

\maketitle
\abstract{
Towards identifying the number of minimal surfaces sharing the same boundary from the geometry of the boundary, 
we propose a numerical scheme with high speed and high accuracy. 
Our numerical scheme is based on the method of fundamental solutions. 
We establish the convergence analysis for Dirichlet energy and $L^\infty$-error analysis for mean curvature. 
Each of the approximate solutions in our scheme is a smooth surface, which is a significant difference from previous studies that required mesh division.
}

\section{Introduction}
In 1762, Lagrange proposed the problem of finding a surface with the least area spanned by a given closed Jordan curve ~\cite{Lagrange_1760}.
Because the first variation of the area of a surface gives the mean curvature vector, the mean curvature of the surface with the least area is everywhere zero.
Thus, a surface whose mean curvature is everywhere zero is called a minimal surface. 
In a more general sense, Lagrange's problem can be rephrased as the problem of finding a minimal surface spanned by a given closed Jordan curve.
Later in 1873, Plateau investigated the properties of minimal surfaces through an experiment on soap films and pointed out that a single closed wire, regardless of its geometry, bounded at least one soap film.
Nowadays, the problem of the existence of minimal surfaces bounded by a given closed Jordan curve is called Plateau's problem.

Although Douglas has established the existence of a solution to the Plateau problem~\cite{Douglas1931solution}, it is difficult to determine whether the solution is unique or whether there are finitely many solutions, even if the simple closed curve is smooth and if the solutions are homeomorphic to the disk. 
The problem has not been completely solved. 
For example, the solution is unique if a closed Jordan curve has a one-to-one parallel projection onto a convex closed curve in the plane~\cite{Rado_1930} or if the total curvature is less than or equal to $4\pi$~\cite{Nitsche1973unique}.
For the case where there is more than one solution, it is known that there are two other minimal surfaces with the same boundary as the Enneper surface, which is one of the exact solutions of the Plateau problem ~\cite{Nitsche1989book}.
The finite solvability is established for generic curves~\cite{BohmeTromba1981}, stable solutions~\cite{Koiso1983}, and a curve whose total curvature is less than $6\pi$~\cite{Nitsche1978finite}. 
As seen from these results, the problem of determining the number of solutions according to the geometry of a closed Jordan curve is one of the most important issues in the geometrical analysis of minimal surfaces.

To solve this problem, numerical analysis on minimal surfaces should help to find a connection between the geometry of the curve and the number of minimal surfaces sharing the curve as the boundary. 
Since minimal surfaces are considered stationary points of functionals, such as area and Dirichlet integrals,
we can obtain minimal surfaces as the convergence limit of some optimization problem. 
Therefore, one possible method to determine the number of solutions corresponding to a given simple closed curve is to take a random initial guess for the optimization problem and count the number of different minimal surfaces obtained as the convergence limit of the optimization problem.

In order to make the above approach practical, it is necessary to establish a numerical scheme, which is
\begin{enumerate}
	\item so fast that many initial values can be handled in a reasonable time, and 
	\item so accurate that the numerical solutions obtained can be distinguished.
\end{enumerate}
Therefore, this paper aims to establish a numerical scheme with high speed and high accuracy enough to achieve the above goals.

Numerical analysis of minimal surfaces has a long history, and since Douglas~\cite{douglas1927method} proposed a numerical scheme using the finite difference method, various methods have been proposed up to the present day.
Tsuchiya gave the first theoretical convergence analysis~\cite{tsuchiya1987discrete,tsuchiya1990note}.
He considered the finite element method and proved the existence of discrete minimal surfaces and convergence for the $H^1$ norm.
However, because he used non-direct arguments, he did not derive the convergence order.
The finite element method is used for the analysis in the Dziuk--Huthinson paper~\cite{dziuk1999discrete-numerics,dziuk1999discrete-convergence}, as in Tsuchiya's papers.
Although the target is somewhat limited to nondegenerate minimal surfaces, the error for the $H^1$ norm is proved to be $\mathrm{O}(h)$.
Their result is the first analytical one that includes the order of convergence.
Following this work, Pozzi~\cite{pozzi2004estimate} gave the $L^2$ error estimate for finite element solutions.
In Dziuk--Hutchinson~\cite{dziuk2006finite}, a method for computing surfaces with a specified mean curvature is given with error analysis.
Numerical analysis of minimal surfaces whose boundaries are given as polygons is considered by Hinze~\cite{hinze1996numerical}, who proves convergence for the $H^1$ norm.
Other than these, various numerical methods have been proposed, for example, high-precision spatial discretization using B-spline curves~\cite{hao2013approximation} and higher-order polynomials~\cite{trasdahl2011high}, computation of mean curvature flow by finite volume method~\cite{tomek2019discrete}, improved performance by mesh refinement~\cite{grodet2018finite}, and methods based on algebraic topology~\cite{schumacher2019variational} and differential forms~\cite{wang2021computing}.
However, to our knowledge, no convergence analysis has been given for any of them.

As seen in Definition \ref{def:min_surf}, numerical computation of minimal surfaces requires solving the Dirichlet boundary value problem for the Laplace equation on the unit disk. 
Therefore, a fast and accurate solver for the Laplace equation can be used to compute minimal surfaces more accurately than in previous studies. 
Thus, in this paper, we employ the method of fundamental solutions (MFS for short), a numerical method for the potential problem.
The MFS is a mesh-free numerical solver because it does not require meshing of the region, as with the finite element and finite difference methods. 
It is so named because a linear combination of the fundamental solution of the partial differential operator of interest constructs the approximate solution. 
Under certain conditions, the approximate solution by the MFS has the remarkable property of exponentially converging to the exact solution with an increasing number of approximation points (Theorem \ref{thm:MFS_error_estimate} holds in the setting of this paper). 
Moreover, since a linear combination of fundamental solutions constructs the approximate solution, the approximate solution is smooth, and differential operations can be performed analytically, resulting in a smooth approximation of minimal surfaces.
When the Jordan curve is embedded into a two-dimensional plane, the problem of finding a minimal surface is reduced to the problem of finding a conformal map from the unit disk onto the simply-connected region bounded by the Jordan curve. Amano and his colleagues have been known to compute conformal mappings with high accuracy using the MFS~\cite{amano1991bidirectional,amano1994charge,amano1998charge,amano2012numerical,sakakibara2020bidirectional}.
However, to our knowledge, the MFS has no successful application to the numerical computation of minimal surfaces.

By utilizing the convergence theorem for the Dirichlet boundary value problem of the Laplace equation in the disk region (Theorem \ref{thm:MFS_error_estimate}), we succeed in giving the existence of approximate minimal surfaces (Theorem \ref{thm:existence_approx_surf}), and the convergence of Dirichlet energy and evaluation of mean curvature of approximate minimal surfaces (Theorem \ref{thm:Dirichlet_energy_mean_curvature}). 
These are the first results of the MFS numerical analysis of minimal surfaces.
As a result, the MFS can now obtain smooth approximations of minimal surfaces, a significant difference from previous studies that required mesh division.

This paper is organized as follows. 
In Section \ref{sec:preliminaries}, we first briefly summarize the geometry of surfaces, then formulate the Plateau problem and present known results on the existence of minimal surfaces. 
Then, the MFS is explained in detail, and the results of the convergence analysis used in this paper are presented. In Section \ref{sec:scheme_basis}, we build the basis of an algorithm for solving the Plateau problem and, in particular, present the existence theorem for approximate minimal surfaces. 
Section \ref{sec:error_analysis} shows that the approximate minimal surfaces obtained by our algorithm converge to the minimal surfaces. In particular, we show the Dirichlet energy's convergence and the mean curvature's $L^\infty$-error. 
In Section \ref{sec:numerics}, after describing the algorithm in actual numerical computations, we demonstrate the usefulness of the algorithm proposed in this paper by giving various numerical examples. 
In Section \ref{sec:search_all}, we propose an algorithm for computing all minimal surfaces based on the numerical scheme proposed in this paper and verify its usefulness through numerical experiments.
Finally, Section \ref{sec:concluding_remarks} summarizes the paper and provides directions for future research.

\section{Preliminaries}
\label{sec:preliminaries}

\subsection{Geometry of surfaces}

In this section, we briefly describe some basic notions from differential geometry closely related to this work.  
Let $B_\rho$ be the disk with the radius $\rho$ in the complex plane $ z=x^1+\mi x^2\in  \ce$ and $\partial B$ be its boundary: 
\begin{align}
	B_\rho:&=\{ z\in\ce |\,| z|<\rho\},\\
	\partial B_{\rho}:&=\{ z\in\ce |\,| z|=\rho\}. 
\end{align}
For notational simplicity, we write $B=B_1$. 
Let us denote the differentiation by $\partial=\partial/\partial  z$, $\bar\partial=\partial/\partial  z$, $\partial_1= \partial/\partial x^1$, and $\partial_2 = \partial/\partial x^2$, respectively. 
For each $f\in C^1(\ce)$, we now have 
\begin{align}
	\partial
	&=(\partial_1-\mi\partial_2)/2,\\
	\bar\partial &=(\partial_1+\mi\partial_2)/2,\\
	\triangle
	&= 4\partial\bar\partial=\partial_u^2 +\partial_v^2,\\
	4|\partial f|^2
	&=(\partial_1 f)^2+(\partial_2 f)^2,\\
	4(\partial f)^2
	&= (\partial_1 f)^2-(\partial_2 f)^2+2\mi (\partial_1 f)(\partial_2 f),
\end{align}
Given a vector-valued function $X: z\in  B\to (X_ i( z) )_{ i=1}^3\in\re^3$, we write $\partial_jX=(\partial_j X_i)_{i=1}^3\in\re^3$ and $\triangle X=(\triangle X_i)_{i=1}^3\in\re^3$. 

A surface $(M,g)$ is a Riemann surface equipped with a Riemannian metric $g$, which yields complex analysis, and Riemannian geometry can be utilized as the surface. 
We now focus on some geometric structures defined on the surface. 
Remenber that a complex function $f\colon\ce\to \ce$ is said to be holomorphic if it satisfies the Cauchy-Riemann equation
\begin{align}
	\bar\partial f=0.
\end{align}
Moreover, if $f$ is bijective and the inverse is also holomorphic, then $f$ is called a biholomorphism. 

The Riemann surface $M$ is equipped with complex charts ${(U_\alpha,\phi_\alpha)}_\alpha$ such that for each $(U_\alpha,\phi_\alpha)$ and $(U_\beta,\phi_\beta)$, $\phi_\alpha\circ \phi_\beta^{-1} :\phi_\beta( U_\alpha \cap U_\beta) \to \phi_\alpha(U_\alpha\cap U_{\beta}) \subset \ce $ is a biholomorphism, and $M=\bigcup_\alpha U_\alpha$ holds. 
The union of all compatible complex charts defines the complex structure on the surface, and a smooth $2$-manifold with a complex structure is called a Riemann surface. 
Given two Riemann surfaces $M$ and $N$, a diffeomorphism $f:M\to N$ is a biholomorphism if $f$ is a biholomorphism in each complex chart. 
In addition, if a biholomorphism exists between two Riemann surfaces $M$ and $N$, then the Riemann surface $M$ is said to be biholomorphic to $N$. 

The Riemannian metric $g$ induces a conformal structure. 
A chart $(U,\phi)$ is isothermal if the metric has the following local representation in the chart:
\begin{align}
	g= \lambda^2 |\md z|^2,
\end{align}
where $\lambda>0$ and $\lambda$ is called the conformal factor. 
A geometric structure defined by the union of all compatible isothermal charts is called a conformal structure, and a smooth $2$-manifold equipped with a conformal structure is called a conformal surface. 
Given two conformal surfaces $M$ and $N$, a diffeomorphism $f:M\to N$ is a conformal mapping if $f$ is conformal in each isothermal chart. 
Moreover, if a conformal mapping exists between two Riemann surfaces $M$ and $N$, then the Riemann surface $M$ is said to be conformal to $N$. 
It is easy to see that every isothermal chart becomes complex, and the converse holds using conventional identification $z=x^1+\mi x^2$. 
Hence, given surfaces $M$ and $N$, a diffeomorphism $f:M\to N$ is a conformal mapping if and only if $f$ is a biholomorphism. 

For instance, let a vector-valued function $X\colon B\to  \re^3$ be a $C^1$ immersion; that is, the the rank of the matrix $(\partial_i X_j)_{i,j}$ is $2$.
Then, disk $B$ may have two different Riemannian metrics: first, the standard 2D Euclidean metric $g_s=(\md x^1)^2+(\md x^2)^2$, and second, the induced metric $g_X= \sum_{i,j} \langle \partial_i X, \partial_j X\rangle \md x^i\md x^j$, where $\langle\cdotp,\cdotp\rangle$ is the 3D Euclidean metric. 
For the identity map ${\rm id}:(B,g_s)\to (B,g_X)$ to be conformal, the immersion $X$ must satisfy
\begin{align}
	\langle\partial_1 X, \partial_1 X\rangle=\langle  \partial_2 X, \partial_2 X\rangle,\quad 
	 \langle \partial_1 X, \partial_2 X\rangle=0.
\end{align}
By using the complex coordinate, it is rewritten as 
\begin{align}
	\Phi_X(z):&= \sum_{i=1}^3 (\partial X_i(z))^2\\
	&=\frac14 \left(\langle\partial_1 X, \partial_1 X\rangle_z-\langle  \partial_2 X, \partial_2 X\rangle_z +2\mi  
	 \langle \partial_1 X, \partial_2 X\rangle_z\right)\\
	&=0.
\end{align}
The function $\Phi_X: B\to \ce$ is called the complex dilatation. 
If and only if the complex dilatation $\Phi_X$ vanishes, the induced metric $g_X$ satisfies $g_X=\lambda^2 g_s$, where $\lambda^2= \langle\partial_1 X, \partial_1 X\rangle=\langle  \partial_2 X, \partial_2 X\rangle$. 
Moreover, if $\triangle X=0$, $\Phi_X$ is a holomorphic function since 
\begin{align}
	\bar\partial \Phi_X 
	&=\sum_{i=1}^3 2 \partial X_i(z)\bar\partial \partial X_i(z) \\
	&=\frac{1}{2}\sum_{i=1}^3 \partial X_i(z)\triangle X_i(z) \\
	&=0.
\end{align}

\subsection{Plateau problem}
We review the Plateau problem in terms of a construction of solutions. 
The following facts in this subsection is briefly introduced by the Dierkes, Hildebrandt and Sauvigny~\cite{Dierkes_Hildebrandt_Sauvigny_2010} in Chapter 4.2. 
The Plateau problem is formulated as finding an immersed surface spanned by a given Jordan curve that minimizes the area. 
A variational argument yields that the minimizer of the area has zero mean curvature. 
The mean curvature $H_X$ of a parametrized surface $X:B\to \re^3$ satisfies 
\begin{align}
	H_X n = \triangle_X X,
\end{align}
where $n=\partial_1 X\times \partial_2 X/|\partial_1 X\times \partial_2 X| $ is the unit normal vector and $\triangle_X$ is the Laplace-Beltrami operator associated with the metric $g_X$ if $|\partial_1 X\times \partial_2 X|\ne 0$. 
In particular, if the complex dilatation $\Phi_X$ vanishes, we can deduce 
\begin{align}
	\triangle_X= \lambda^{-2} \triangle,
\end{align}
where $\triangle$ is the Laplacian $\triangle=\partial_1^2+\partial_2^2$. 
Hence, the minimal surface is formulated as a solution to the following problem.
\begin{ddef}
\label{def:min_surf}
	Given a closed Jordan curve $\Gamma\subset\re^3$, $X: \bar B\to 
	\re^3 $ is called a minimal surface spanned by $\Gamma$ if the following four conditions are satisfied:
	\begin{enumerate}
		\item $X\in C^0(\bar B,\re^3)\cap C^2(B,\re^3);$
		\item $\triangle X=0;$
		\item the restriction $X|_{\partial B}: \partial B\to \Gamma$ is a homeomorphism; 
		\item $\Phi_X=0.$
	\end{enumerate}
\end{ddef}

Courant~\cite{Courant_1937} established the existence of a minimal surface spanned by a given closed Jordan curve by minimizing the Dirichlet energy 
\begin{align}
	D(X):= \frac12 \int_B (|\partial_1 X|^2+|\partial_2 X|^2) \md x^1\md x^2
\end{align}
in the Sobolev class $X\in H^1(B,\re^3)$. 
More precisely, the minimizing problem of the Dirichlet integral is performed in the following space of admissible functions. 
\begin{ddef}
	Given a closed Jordan curve $\Gamma$ in $\re^3$, a mapping $X\in H_2^1(B,\re^3)$ is said to be of class $\mathcal{C}(\Gamma)$ for a fixed orientation if its Sobolev trace $X|_{\partial B}$ can be represented by a weakly monotonic, continuous mapping $\varphi:\partial B\to \Gamma$ onto $\Gamma$. 
\end{ddef}
Unless otherwise stated, hereafter, $\mathcal{C}(\Gamma)$ is always defined for a fixed orientation.
Consequently, the minimizing problem of the Dirichlet integral is formulated as follows: 
\begin{align}
	\mathcal{P}(\Gamma):\quad D(X)\to \min\quad \text{in the class } \mathcal{C}(\Gamma).
\end{align}
The existence of a solution to the problem $\mathcal{P}(\Gamma)$ is obtained when $\mathcal{C}(\Gamma)$ is nonempty. 
In particular, it is satisfied if $\Gamma$ is a closed Jordan curve of finite length. 
\begin{thm}[{\cite[Chapter 4.3, Theorem 1]{Dierkes_Hildebrandt_Sauvigny_2010}}]
\label{thm:exist_min}
	If $\mathcal{C}(\Gamma)$ is nonempty, then the minimizing problem $\mathcal{P}(\Gamma)$ has at least one solution, continuous on $\bar B$ and harmonic in $B$. 
	In particular, $\mathcal{P}(\Gamma)$ has such a solution for every rectifiable curve $\Gamma$. 
\end{thm}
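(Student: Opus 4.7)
The plan is to apply the direct method of the calculus of variations to the Dirichlet functional $D$, with the classical caveat that $D$ is invariant under the (non-compact) conformal group of $B$, so a bare minimizing sequence need not be precompact in any natural topology on $\mathcal{C}(\Gamma)$. The remedy combines three ingredients: replacement of an arbitrary minimizing sequence by its componentwise harmonic extensions, a three-point normalization to quotient out the conformal ambiguity, and a Courant--Lebesgue type equicontinuity estimate for the boundary traces.

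First, take a minimizing sequence $\{X_n\}\subset \mathcal{C}(\Gamma)$ with $D(X_n)\to d:=\inf_{\mathcal{C}(\Gamma)} D$, and replace each $X_n$ by the componentwise Poisson integral $\tilde{X}_n$ of its trace $\varphi_n=X_n|_{\partial B}$. Since harmonic functions minimize the Dirichlet energy among $H^1$ extensions of a prescribed trace, $\tilde X_n\in \mathcal{C}(\Gamma)$ is harmonic with $D(\tilde X_n)\le D(X_n)$, so I may assume from the outset that every $X_n$ is harmonic on $B$ and determined by its continuous, weakly monotone boundary map $\varphi_n\colon\partial B\to\Gamma$.

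Next, to eliminate the conformal degeneracy, fix three distinct points $w_1,w_2,w_3\in\partial B$ and three distinct points $P_1,P_2,P_3\in\Gamma$ respecting the orientation, and precompose each $X_n$ with a M\"obius automorphism of $B$ so that $\varphi_n(w_k)=P_k$ for $k=1,2,3$. Since $D$ and the class $\mathcal{C}(\Gamma)$ are invariant under such automorphisms, the normalized sequence is still minimizing. I would then invoke the Courant--Lebesgue lemma: the uniform energy bound forces, for every $\delta>0$ and every $p\in\bar B$, the existence of a radius $r_n\in(\delta,\sqrt{\delta})$ on which the image of the circular arc $X_n(\partial B_{r_n}(p)\cap \bar B)$ has Euclidean diameter controlled by $C/\sqrt{\log(1/\delta)}$. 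Combined with the weak monotonicity of $\varphi_n$ and the three-point normalization, this produces an equicontinuous family $\{\varphi_n\}$ on $\partial B$.

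Finally, Arzel\`a--Ascoli yields a subsequence $\varphi_n\to\varphi$ uniformly on $\partial B$, where $\varphi$ is a continuous weakly monotone map onto $\Gamma$; the Poisson extension $X$ of $\varphi$ therefore lies in $\mathcal{C}(\Gamma)$, is harmonic on $B$, and is continuous on $\bar B$, and $X_n\to X$ uniformly on compact subsets of $B$ by the Poisson representation. Weak lower semicontinuity of $D$ on $H^1$ gives $D(X)\le \liminf D(X_n)=d$, and the reverse inequality is trivial, so $X$ solves $\mathcal{P}(\Gamma)$. The second assertion follows because a rectifiable Jordan curve admits a Lipschitz (hence $H^{1/2}$) parametrization $\partial B\to\Gamma$, whose Poisson extension belongs to $\mathcal{C}(\Gamma)$. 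The principal obstacle is the third step: the Courant--Lebesgue estimate together with the interaction between weak monotonicity and the three-point normalization is precisely where the non-compactness of the conformal group is tamed, and it requires careful handling to preclude jump discontinuities in the limiting trace.
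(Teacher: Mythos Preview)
The paper does not supply its own proof of this theorem; it is quoted verbatim as a known result from \cite{Dierkes_Hildebrandt_Sauvigny_2010}, Chapter~4.3, Theorem~1, and is used only as background for the later numerical analysis. Your sketch is essentially the classical Courant argument that appears in that reference: harmonic replacement, three-point normalization to kill the M\"obius freedom, the Courant--Lebesgue lemma to force equicontinuity of the boundary traces, Arzel\`a--Ascoli, and lower semicontinuity of $D$. So there is nothing to compare against in the paper itself, and your outline matches the cited source.

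One small point worth tightening in your third step: you assert that the uniform limit $\varphi$ is again a weakly monotone map \emph{onto} $\Gamma$. Weak monotonicity survives uniform limits, but surjectivity does not follow automatically---a priori the limit could collapse an arc of $\partial B$ to a single point of $\Gamma$ and miss part of the curve. Ruling this out uses that $\Gamma$ is a Jordan curve together with the three-point condition: if $\varphi$ omitted a subarc of $\Gamma$, weak monotonicity would force $\varphi$ to be constant on some arc of $\partial B$ bounded by two of the $w_k$, contradicting $\varphi(w_k)=P_k$ for distinct $P_k$. This is exactly the place where, as you say, the interaction between monotonicity and the normalization does the work, but it deserves an explicit sentence rather than being folded into ``careful handling.''
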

It follows from Weyl's lemma that all minimizers $X\in{\cal C}(\Gamma)$ are actually, i.e., $\triangle X=0$. 
However, changing the coordinate, the minimizer is not in general harmonic again in the new coordinate since the Laplacian $\triangle$ and the Dirichlet energy $D(X)$ change as the coordinates are changed. 
Courant~\cite{Courant_1937} showed that a minimal surface is obtained by taking the variation of the Dirichlet energy by changing variables. 
\begin{thm}[{\cite[Chapter 4.5, Theorem 2]{Dierkes_Hildebrandt_Sauvigny_2010}}]
	Every solution $X$ of the variational problem $\mathcal{P}(\Gamma)$ is a minimal surface. 
\end{thm}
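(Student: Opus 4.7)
The plan is to reduce the theorem to the conformality condition $\Phi_X\equiv 0$, since the other clauses of Definition \ref{def:min_surf} follow essentially by assumption and standard regularity. By Weyl's lemma (as noted in the paragraph preceding the theorem), every solution $X\in\mathcal{C}(\Gamma)$ of $\mathcal{P}(\Gamma)$ is classically harmonic on $B$, hence $\triangle X=0$. The identity displayed between Definition \ref{def:min_surf} and Theorem \ref{thm:exist_min} then ensures that $\Phi_X$ is holomorphic on $B$. Continuity on $\bar B$ and the fact that $X|_{\partial B}\colon\partial B\to\Gamma$ is a homeomorphism come out of interior harmonicity combined with the standard Courant--Lebesgue boundary regularity theorem for weakly monotone traces, so the only real content left is to prove $\Phi_X\equiv 0$.

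To extract $\Phi_X=0$, I would perform inner variations. Pick $\eta\in C^\infty(\bar B,\re^2)$ tangent to $\partial B$ on $\partial B$ (so $\eta\cdot\nu=0$ there), and for small $\epsilon$ let $\tau_\epsilon\colon\bar B\to\bar B$ be its flow diffeomorphism, whose restriction to $\partial B$ is an orientation-preserving homeomorphism of the circle. Then $Y_\epsilon:=X\circ\tau_\epsilon$ still lies in $\mathcal{C}(\Gamma)$, because its trace $\varphi\circ(\tau_\epsilon|_{\partial B})$ is again a weakly monotone continuous parametrization of $\Gamma$ in the prescribed orientation. Minimality therefore yields $\frac{\md}{\md\epsilon}D(Y_\epsilon)|_{\epsilon=0}=0$. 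A direct chain-rule computation, writing $\eta=\eta^1+\mi\eta^2$ as a complex function, produces (up to an unimportant positive constant)
\begin{align}
\mathrm{Re}\int_B \Phi_X(z)\,\partial\bar\eta(z)\,\md x^1\md x^2=0,
\end{align}
and an integration by parts, using the already-established $\bar\partial\Phi_X=0$, converts this into a purely boundary pairing between $\Phi_X$ and the tangential trace $\eta|_{\partial B}$.

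The main obstacle is passing from the vanishing of this boundary pairing, valid for every admissible $\eta$, to the pointwise identity $\Phi_X\equiv 0$. Interior test fields compactly supported in $B$ only yield that the holomorphic function $\Phi_X$ is additionally antiholomorphic, hence merely constant. To eliminate the constant I would plug in genuine boundary-tangent fields: on $\partial B$ a tangent vector field has the form $\eta(\me^{\mi\theta})=\mi\me^{\mi\theta}\mu(\theta)$ with $\mu$ an arbitrary real-valued smooth function, and the boundary identity for every such $\mu$ forces $z^2\Phi_X(z)$ to be real on $\partial B$. A Schwarz reflection argument then extends $z^2\Phi_X(z)$ to a bounded holomorphic function on the Riemann sphere, which must be constant, and the double zero at $z=0$ pins this constant to zero. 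Hence $\Phi_X\equiv 0$ on $B$, at which point all four clauses of Definition \ref{def:min_surf} are satisfied and $X$ is a minimal surface spanned by $\Gamma$.
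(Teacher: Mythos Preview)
The paper does not supply a proof of this theorem; it is quoted with a citation to \cite[Chapter 4.5, Theorem 2]{Dierkes_Hildebrandt_Sauvigny_2010} and nothing more, so there is no in-paper argument to compare against. Your outline is essentially the classical Courant inner-variation proof that appears in that reference, so in spirit you are reproducing the cited argument. Two technical points deserve correction.

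First, the inner-variation integrand should be $\mathrm{Re}\bigl(\Phi_X\,\bar\partial\eta\bigr)$, not $\mathrm{Re}\bigl(\Phi_X\,\partial\bar\eta\bigr)$; these differ because $\partial\bar\eta=\overline{\bar\partial\eta}$ while $\Phi_X$ is genuinely complex. With the correct operator, integrating by parts uses $\bar\partial\Phi_X=0$ to kill the interior term, and compactly supported $\eta$ then yield \emph{no} information whatsoever---not ``$\Phi_X$ constant.'' (Your own sentence invoking $\bar\partial\Phi_X=0$ in the integration by parts shows you intended $\bar\partial\eta$.) The boundary reduction to $\mathrm{Im}\bigl(z^2\Phi_X(z)\bigr)=0$ on $\partial B$ is correct once this is fixed.

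Second, the Schwarz-reflection step presupposes that $\Phi_X$ is bounded (or at least has boundary values) on $\partial B$, but from $X\in H^1(B)$ one only knows $\Phi_X\in L^1(B)$. The extension to ``a bounded holomorphic function on the Riemann sphere'' is therefore unjustified as written. The standard remedy is either to run the variation on $B_\rho$ and let $\rho\uparrow 1$, or---more simply---to note that $\mathrm{Im}(z^2\Phi_X)$ is harmonic on $B$ with vanishing boundary trace in the weak sense delivered by the variation, hence identically zero; then $z^2\Phi_X$ is a holomorphic function with zero imaginary part, so it is a real constant, and the double zero at $z=0$ forces $\Phi_X\equiv 0$. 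This avoids reflection and any boundedness hypothesis.
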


In particular, the following is helpful to show the existence of the limit for an approximate solution for a minimal surface. 

\begin{thm}[{\cite[Chapter 4.3, Theorem 3]{Dierkes_Hildebrandt_Sauvigny_2010}}]
\label{thm:exist_lim}
	Let $\{\Gamma_n\}_{n\geq 1}$ be a sequence of closed (oriented)  Jordan curves in $\re^3$, which converge in the sense of Fr\'echet to some closed (oriented) Jordan curve $\Gamma$. 
	Let $\{X_n\}_{n\geq1}\subset C^0(\bar B,\re^3)\cap C^2(B,\re^3)$ be  $\triangle X_n=0$ and $X_n(\partial B)=\Gamma_n$. 
	Then, there exists a subsequence $\{X_{n_p}\}_{p\geq1}$ and $X\subset C^0(\bar B,\re^3)\cap C^2(B,\re^3)$ with $\triangle X=0$ and $X(\partial B)=\Gamma$ such that $X_{n_p}\to X$ uniformly on $\bar B$ as $p\to\infty$. 
\end{thm}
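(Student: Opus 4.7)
The plan is to combine the maximum principle (for an interior $L^\infty$ bound), standard interior regularity for harmonic functions (for local $C^k$ compactness in $B$), and the Courant--Lebesgue lemma (for equicontinuity up to $\partial B$), and then close with Arzel\`a--Ascoli.

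First I would establish a uniform $L^\infty$ bound. Since $\Gamma_n \to \Gamma$ in the Fr\'echet sense, the $\Gamma_n$ eventually lie in a common compact set $K\subset\re^3$. Applying the maximum principle componentwise to each harmonic $X_n$ gives
\begin{align}
\|X_n\|_{L^\infty(\bar B)}\;\leq\;\max_{w\in\Gamma_n}|w|\;\leq\;\sup_{w\in K}|w|,
\end{align}
uniformly in $n$. Next, interior estimates for harmonic functions (Cauchy-type derivative bounds from the mean value property) yield, for every compact $K'\Subset B$ and every $k\geq 0$, uniform control of $\|X_n\|_{C^k(K')}$. A diagonal extraction then produces a subsequence (still denoted $\{X_n\}$) converging in $C^k_{\mathrm{loc}}(B)$ for every $k$ to a harmonic limit $X\in C^\infty(B,\re^3)$.

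The main obstacle is promoting this interior convergence to uniform convergence on $\bar B$, i.e.\ obtaining equicontinuity of $\{X_n\}$ at the boundary. I would invoke the Courant--Lebesgue lemma: under the three-point normalization implicit in the Plateau setup of the reference (so that the conformal group is fixed and the boundary parametrizations cannot concentrate), for each $z_0\in\partial B$ and each $\delta\in(0,1)$ there is a radius $\rho=\rho_n(z_0,\delta)\in[\delta,\sqrt{\delta}]$ such that the oscillation of $X_n$ on the arc $\{|z-z_0|=\rho\}\cap B$ is bounded by $C/\sqrt{\log(1/\delta)}$ uniformly in $n$. Combined with the equicontinuity of the boundary traces $X_n|_{\partial B}$ coming from the Fr\'echet convergence $\Gamma_n\to\Gamma$ (together with the normalization), this yields equicontinuity of $\{X_n\}$ on $\bar B$.

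Once equicontinuity and uniform boundedness on $\bar B$ are in hand, Arzel\`a--Ascoli delivers a further subsequence $\{X_{n_p}\}$ converging uniformly on $\bar B$ to some $X\in C^0(\bar B,\re^3)$, necessarily agreeing with the harmonic limit inside $B$; hence $X\in C^0(\bar B,\re^3)\cap C^2(B,\re^3)$ with $\triangle X=0$. Finally, since $X_{n_p}(\partial B)=\Gamma_{n_p}$ and $\Gamma_{n_p}\to\Gamma$ in Fr\'echet, passing to the uniform limit on $\partial B$ gives $X(\partial B)=\Gamma$. The hardest step is the third one: extracting a quantitative modulus of continuity up to the boundary. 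It is there that the Courant--Lebesgue estimate is indispensable, and where one must be explicit about the (three-point) normalization of the boundary parametrizations that makes the Dirichlet energies and the boundary moduli controllable uniformly in $n$.
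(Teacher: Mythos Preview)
The paper does not give its own proof of this statement; it is quoted as \cite[Chapter 4.3, Theorem 3]{Dierkes_Hildebrandt_Sauvigny_2010} and later invoked as a black box in the proof of Theorem~\ref{thm:Dirichlet_energy_mean_curvature}. There is therefore nothing in the paper to compare your argument against.

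Your outline is the standard route taken in the cited reference: maximum principle for a uniform $L^\infty$ bound, interior harmonic estimates for $C^k_{\mathrm{loc}}$ compactness, the Courant--Lebesgue lemma for equicontinuity up to $\partial B$, and Arzel\`a--Ascoli to conclude. You are right to flag that a three-point normalization is needed. One point you should make fully explicit: the Courant--Lebesgue oscillation estimate has the form $\mathrm{osc}\le C\sqrt{D(X_n)/\log(1/\delta)}$, so a \emph{uniform} bound on the Dirichlet energies $D(X_n)$ is required before the constant can be taken independent of $n$; this is not automatic from the hypotheses as the paper states them. Indeed, the paper's transcription omits both the three-point condition and any control on $D(X_n)$, and in that bare form the conclusion can fail (fix $\Gamma_n\equiv\Gamma$ and take harmonic extensions of boundary reparametrizations that collapse almost all of $\partial B$ to a point). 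The theorem in Dierkes--Hildebrandt--Sauvigny carries these additional hypotheses---the $X_n$ there are solutions of $\mathcal{P}(\Gamma_n)$ satisfying a three-point condition---and it is precisely those that make your Courant--Lebesgue step and the equicontinuity of the boundary traces go through. So your approach matches the source, but you should state up front that you are importing hypotheses absent from the paper's version of the statement.
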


The above facts is briefly introduced by the Dierkes, Hildebrandt and Sauvigny~\cite{Dierkes_Hildebrandt_Sauvigny_2010} in Chapter 4.

\subsection{Method of fundamental solutions}
\label{subsec:MFS}

The method of fundamental solutions (MFS for short) is a mesh-free numerical solver for linear partial differential equations such as the Laplace equation, the Helmholtz equation, and the biharmonic equation. 
Its idea is quite simple, and the algorithm is described for the Laplace equation, which is the subject of this paper.

Let $\Omega$ be a bounded region in $\mathbb{C}$ with smooth boundary $\partial\Omega$, and consider the Dirichlet boundary value problem for the Laplace equation in $\Omega$ with a given boundary data $f:\partial \Omega\to \re  $:
\begin{align}
    \begin{dcases*}
        \triangle u=0&in $\Omega$,\\
        u=f&on $\partial\Omega$.
    \end{dcases*}
\end{align}
The MFS constructs an approximate solution for this problem by the following procedure.
\begin{enumerate}
    \item Take $N\in\mathbb{N}$ and fix it. 
        Moreover, arrange $N$ points $\zeta_k$ ($k=1,2,\ldots,N$) ``suitably'' in $\mathbb{C}\setminus\overline{\Omega}$, which we call the singular points.
    \item Seek an approximate solution $u^{(N)}$ in the following form:
        \begin{align}
            u^{(N)}(z)=\sum_{k=1}^NQ_kG(z-\zeta_k),
            \label{eq:MFS_approx_sol}
        \end{align}
        where $G(z)=(2\pi)^{-1}\log|z|$ is the fundamental solution of the Laplace operator.
        Note that $u^{(N)}$ satisfies the Laplace equation exactly in $\Omega$ since the singular points $\{\zeta_k\}_{k=1}^N$ are outside $\Omega$.
    \item Determine coefficients $\{Q_k\}_{k=1}^N$ by the collocation method.
        Namely, choose $N$ points $z_j$ ($j=1,2,\ldots,N$) ``suitably'' on $\partial\Omega$, and impose the following conditions:
        \begin{align}
        \label{eq:boundary_condition}
            u^{(N)}(z_j)=f(z_j),
            \quad
            j=1,2,\ldots,N.
        \end{align}
\end{enumerate}

Eq.~\eqref{eq:boundary_condition} can be rewritten as a linear system called the collocation equations, 
\begin{align}
\label{eq:collocation_equation}
	\mathbf{G}\boldsymbol{Q}=\boldsymbol{f},
\end{align}
where 
\begin{align}
	\mathbf{G} = (G(z_j-\zeta_k))_{j,k}\in\mathbb{R}^{N\times N},
	\quad
    \boldsymbol{Q}=(Q_{k})_k\in\mathbb{R}^N,
    \quad
    \boldsymbol{f}=(f(z_j))_j\in\mathbb{R}^N. 
\end{align}
As seen from the algorithm, the MFS does not require meshing the region, and the approximate solution is constructed by choosing appropriate points on the boundary and outside the region.
However, what constitutes appropriate point placement is still mathematically unsolved.
In this paper, it is only necessary to consider the case where the problem region is the unit disk.
In this case, it is natural to place the collocation points $\{z_j\}_{j=1}^N$ and singular points $\{\zeta_k\}_{k=1}^N$ uniformly on concentric circles as follows:
\begin{alignat}{2}
    z_j&=\omega^j,&\quad&j=1,2,\ldots,N,\label{eq:collocation_points}\\
    \zeta_k&=R\omega^k,&\quad&k=1,2,\ldots,N,\label{eq:singular_points}
\end{alignat}
where $R>1$ and $\omega=\exp(2\pi\mathrm{i}/N)$.

Then, we can solve the collocation equations~\eqref{eq:collocation_equation} explicitly. 
Since the coefficient matrix $\boldsymbol{G}$ is now circulant, its inverse $\boldsymbol{G}^{-1}=(G^{-1}_{kj})_{kj}$ is presented by 
\begin{align}
    G_{kj}^{-1}
    \coloneqq
    \frac1N\sum_{l=1}^N\frac{\omega^{(k-j)(l-1)}}{\varphi_{l-1}^{(N)}},
\end{align}
where 
\begin{align}
    \varphi_p^{(N)}
    \coloneqq
    \sum_{k=1}^N\omega^{p(k-1)}G(1-\zeta_k),
    \quad
    p\in\mathbb{Z}.
\end{align}
As a result, the coefficients $Q_{k}$ are explicitly given by
\begin{align}
\label{eq:analytic_Q}
    Q_{k}=\sum_{j=1}^NG_{kj}^{-1}f(z_j)= \frac1N\sum_{j=1}^N\sum_{l=1}^N\frac{\omega^{(k-j)(l-1)}}{\varphi_{l-1}^{(N)}}f(z_j)
\end{align}
for each $k=1,2,\ldots,N$. 
Hence, we find that an approximate solution exists and that it can be concretely constructed.

Under the above setting, the following theorem holds.
\begin{thm}[{\cite[Theorem 2]{katsurada1988mathematical}, \cite[Theorem 2.3]{katsurada1989mathematical}}]
    \label{thm:MFS_error_estimate}
    \begin{enumerate}
        \item Suppose that the boundary data $f$ is real analytic.
        Then, there are constants $C>0$ and $\tau\in(0,1)$, independent of $N$, such that
        \begin{align}
            \|u-u^{(N)}\|_{L^\infty(B)}
            \le
            C\tau^N.
        \end{align}
        \item Let $\{f_n\}$ be the Fourier coefficients of $f$.
        \begin{enumerate}
            \item If the Fourier series $\sum_nf_n\mathrm{e}^{\mathrm{i}n\theta}$ is absolutely convergent, then the approximate solution uniformly converges to the exact solution in $B$ as $N\to\infty$; that is,
            \begin{align}
                \|u-u^{(N)}\|_{L^\infty(B)}
                \longrightarrow0
                \quad
                (N\to\infty)
            \end{align}
            \item If $f_n=\mathrm{O}(|n|^{-\alpha})$ for some $\alpha>1$ as $|n|\to\infty$, then we have
            \begin{align}
                \|u-u^{(N)}\|_{L^\infty(B)}=\mathrm{O}(N^{-\alpha+1})
                \quad
                (N\to\infty).
            \end{align}
        \end{enumerate}
    \end{enumerate}
\end{thm}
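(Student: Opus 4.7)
The plan is to reduce the $L^\infty$ bound to a Fourier-analytic computation on $\partial B$, exploiting the explicit circulant formula \eqref{eq:analytic_Q} and the rotational symmetry of the singular points. Since $u-u^{(N)}$ is harmonic in $B$ and continuous on $\bar B$, the maximum principle gives $\|u-u^{(N)}\|_{L^\infty(B)} = \|u-u^{(N)}\|_{L^\infty(\partial B)}$, so it suffices to bound the trace error. I would expand the fundamental solution in a Laurent-type series: for $|z|<R$,
\[
G(z-R\omega^k) = \frac{\log R}{2\pi} - \frac{1}{4\pi}\sum_{m=1}^\infty \frac{1}{m R^m}\bigl(z^m\omega^{-mk}+\bar z^m \omega^{mk}\bigr),
\]
and substitute \eqref{eq:analytic_Q} into $u^{(N)}(z)=\sum_k Q_k G(z-\zeta_k)$. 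The orthogonality $\sum_{k=1}^N \omega^{pk}=N\,\mathbf{1}[N\mid p]$ then collapses the triple sum and produces an explicit Fourier expansion of $u^{(N)}|_{\partial B}$, whose $n$-th coefficient is (up to a factor involving $R^{|n|}$, $|n|$, and $\varphi_p^{(N)}$) proportional to the discrete Fourier coefficient $\hat f_p^{(N)}:=N^{-1}\sum_{j=1}^N \omega^{-pj}f(z_j)$, where $p\equiv n\pmod N$, $p\in\{0,\ldots,N-1\}$.

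Applying the same logarithmic expansion to $\varphi_p^{(N)}=\sum_k \omega^{p(k-1)}G(1-\zeta_k)$ shows that $\varphi_p^{(N)}$ behaves like $R^{-|p|}/(4\pi|p|)$ for $0<|p|<N/2$, up to a geometric-series tail in $R^{-N}$, and like $N\log R/(2\pi)$ when $p\equiv 0\pmod N$. Combined with the aliasing identity $\hat f_p^{(N)}=\sum_{q\in\ze} f_{p+qN}$, this decomposes the boundary error as
\[
u(\me^{\mi\theta})-u^{(N)}(\me^{\mi\theta}) = \sum_{|n|>N/2} f_n\me^{\mi n\theta} + E_{\mathrm{alias}}(\theta) + E_{\mathrm{mult}}(\theta),
\]
where $E_{\mathrm{alias}}$ collects the contributions of $f_{p+qN}$ for $q\neq 0$ and $E_{\mathrm{mult}}$ is the multiplicative correction coming from the discrepancy between $R^{-|p|}/\varphi_p^{(N)}$ and its leading-order value. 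Each term is controlled in $L^\infty$ by the $\ell^1$-norm of its Fourier coefficients.

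The three assertions then follow by inserting the appropriate decay hypothesis on $f_n$. For (1), real analyticity yields $|f_n|\le C\rho^{-|n|}$ for some $\rho>1$; choosing $R\in(1,\rho)$ makes every tail geometric in $N$ and delivers $C\tau^N$ with $\tau=\max(\rho^{-1},R\rho^{-1})<1$. For (2)(a), the hypothesis $\sum_n |f_n|<\infty$ lets the dominated convergence theorem send each aliased tail and each truncation to zero. For (2)(b), the decay $|f_n|=\mathrm{O}(|n|^{-\alpha})$ gives both the truncation $\sum_{|n|>N/2}|f_n|$ and the aliasing $\sum_{q\neq 0}|f_{p+qN}|$ of order $N^{-\alpha+1}$ uniformly in $p$. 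The main technical obstacle will be obtaining sharp two-sided bounds on $|\varphi_p^{(N)}|$ uniformly in $p$ and $N$, particularly near $|p|\sim N/2$; one must verify that the exponential decay $R^{-|p|}$ in the denominator is exactly compensated by the exponential decay of the aliased Fourier tails, so that division by $\varphi_p^{(N)}$ does not amplify the error as $N\to\infty$.
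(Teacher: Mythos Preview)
The paper does not prove this theorem; it is quoted from the cited references of Katsurada (1988, 1989) and stated without proof. Consequently there is no ``paper's own proof'' to compare your proposal against.

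That said, your sketch is essentially the argument carried out in those original references: reduce to $\partial B$ via the maximum principle, expand $G(z-R\omega^k)$ as a Fourier/Laurent series, use the circulant structure to obtain explicit expressions for $\varphi_p^{(N)}$ and the Fourier coefficients of $u^{(N)}|_{\partial B}$, and then control the error through the aliasing identity $\hat f_p^{(N)}=\sum_q f_{p+qN}$. The decomposition into truncation, aliasing, and multiplier-correction pieces is the right one, and the obstacle you identify --- uniform two-sided bounds on $|\varphi_p^{(N)}|$ near $|p|\sim N/2$ --- is exactly where the work lies in Katsurada's papers. Your handling of case~(1) is slightly loose: the correct constraint is that the $R^{|n|}$ amplification from dividing by $\varphi_p^{(N)}$ must be beaten by the $\rho^{-|n|}$ decay of $f_n$, so one needs $R<\rho$ and the resulting $\tau$ involves $R/\rho$ and $1/R$ rather than $R/\rho$ and $1/\rho$; but this is a detail that would be corrected once the computation is written out in full.
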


Since the approximate solution \eqref{eq:MFS_approx_sol} by the MFS is analytic, its derivatives can also be computed analytically.
For instance,
\begin{align}
    \partial u^{(N)}(z)
    =
    \sum_{k=1}^NQ_k\partial G(z-\zeta_k).
\end{align}
As pointed out in \cite[Section 4]{katsurada1988mathematical} and \cite[Remark 4.1]{katsurada1989mathematical}, under the same situation in Theorem \ref{thm:MFS_error_estimate}, we can also prove similar estimates for $\|\partial u-\partial u^{(N)}\|_{L^\infty(B)}$; the $W^{1,\infty}$-error $\|u-u^{(N)}\|_{W^{1,\infty}(B)}$ tends to $0$ as $N\to\infty$ under mild assumptions on the boundary data $f$.
The maximum principle for harmonic functions implies that
\begin{align}
    &\|u-u^{(N)}\|_{L^p(B)}
    \le
    \pi^{1/p}\|u-u^{(N)}\|_{L^\infty(B)},\\
    &\|\partial u-\partial u^{(N)}\|_{L^p(B)}
    \le
    \pi^{1/p}\|\partial u-\partial u^{(N)}\|_{L^\infty(B)}
\end{align}
for $p\in[1,\infty)$.
Repeating the same procedure, we find that, for $m\in\mathbb{N}$ and $p\in[1,\infty]$, the $W^{m,p}$-error $\|u-u^{(N)}\|_{W^{m,p}(B)}$ tends to $0$ as $N\to\infty$, depending on the regularity of the solution.

\section{Numerical scheme solving Plateau problem}
\label{sec:scheme_basis}
In what follows, we construct a minimal surface $X\in C^0(\bar B,\re^3)\cap C^2(B,\re^3)$ spanned by a given rectifiable closed Jordan curve $\Gamma\subset \re^3$. 
Let us fix a homeomorphism $b:\partial B\to \Gamma$.
Then, since $X|_{\partial B}: \partial B\to \Gamma$ is also homeomorphism, we deduce that there exists a homeomorphism $\phi:\partial B\to \partial B$ such that 
\begin{enumerate}
		\item $\triangle X=0$ on $B$;
		\item $X=b\circ \phi$ on $\partial B$;
		\item $\Phi_X=0$ on $B$,
\end{enumerate}
by taking $\phi= b^{-1}\circ X|_{\partial B}$. 
Note that for a given $\phi$, $X$ is solved using Poisson kernel $P$ of the Dirichlet boundary value problem; that is, $X$ is given by
\begin{align}
\label{eq:Poisson_kernel}
	X=X(z;\phi)=(P*(b_1\circ\phi),P*(b_2\circ\phi),P*(b_3\circ\phi)).
\end{align}
From this point of view, we can say $\phi$ is chosen to attain $\Phi_{X(\cdotp;\phi)}=0$. 
In other words, for a given boundary mapping $b:\partial B\to \Gamma$, find $\phi:\partial B\to \partial B$ subject to Eq.~\eqref{eq:Poisson_kernel} and $||\Phi_{X(\cdotp;\phi)}||_{L^\infty(B)}=0$. 
Remember that $\Phi_X$ is a holomorphic function if $\triangle X=0$. 
Owing to the maximum principle, for minimizing $||\Phi_{X(\cdotp;\phi)}||_{L^\infty(B)}$, it suffices to minimize $||\Phi_{X(\cdotp;\phi)}||_{L^\infty(\partial B_\rho )}$ for sufficiently close $\rho\in(0,1]$ to $1$. 
Hence, we can obtain the desired $\phi$ by solving the minimization problem $\min\{ ||\Phi_{X(\cdotp;\phi)}||_{L^\infty(\partial B_\rho )}|\,\phi\in \mathrm{Homeo}(\partial B)\}$, where $\mathrm{Homeo}(\partial B)$ is the space of all homeomorphisms on $\partial B$. 
In this paper, we solve Eq.~\eqref{eq:Poisson_kernel} and $\min\{ ||\Phi_{X(\cdotp;\phi)}||_{L^\infty(\partial B_\rho )}|\,\phi\in \mathrm{Homeo}(\partial B)\}$ by the MFS and Nesterov's accelerated gradient descent, respectively. 

First, discretize Eq.~\eqref{eq:Poisson_kernel} for a given $b:\partial B\to \Gamma$ and a given $\phi:\partial B\to \partial B$ by the following procedure. 
\begin{enumerate}
	\item Take $z_j$ and $\zeta_k$ given by Eq.~\eqref{eq:collocation_points} and Eq~\eqref{eq:singular_points} for a given $N\in \ze_{\ge1}$. 
	\item Take a transformation vector $\boldsymbol{\phi}=(\phi(z_j))_{j=1}^N\in\mathbb{T}^N$, where $\mathbb{T}^N$ is the $N$-dimensional torus.
	\item Seek an approximate solution $X_i^{(N)}(\cdotp ;\boldsymbol{\phi} ):\bar B\to \re $ for each $i=1,2,3$ in the following form: 
	 \begin{align}
	 X_i^{(N)}(z;\boldsymbol{\phi} )= \sum_{k=1}^N Q_{ik}(\boldsymbol{\phi} ) G(z-\zeta_k)\quad \text{on }B.
	 \end{align}
	\item Determine the coefficients $\boldsymbol{Q}_i(\boldsymbol{\phi})=(Q_{ik}(\boldsymbol{\phi}))_{k=1}^N \in \re^N$ by 
	\begin{align}
		X_i^{(N)}(z_j;\boldsymbol{\phi} )
		&= b_i(\phi_j),
	\end{align}
	which can be solved explicitly using Eq.~\eqref{eq:analytic_Q}.  
	\item Make an approximate surface $X^{(N)}(\cdotp ;\boldsymbol{\phi} ):\bar B\to \re^3$ by 
	\begin{align}
		X^{(N)}(z;\boldsymbol{\phi} )=(X_1^{(N)},X_2^{(N)},X_3^{(N)})
		(z;\boldsymbol{\phi} ).
	\end{align}
\end{enumerate}

Second, minimize $||\Phi_{X^{(N)}(\cdotp;\boldsymbol{\phi} )}||_{L^\infty(B)}$ in the class $\phi\in\mathrm{Homeo}(\partial B)$. 
Since $X^{(N)}$ is determined by $\boldsymbol{\phi}\in\mathbb{T}^N$, the admissible space for the minimization problem can be reduced to the finite-dimensional space $\mathbb{T}^N$ from the infinite-dimensional space $\mathrm{Homeo}(\partial B)$.
We write $\Phi_{X^{(N)}(\cdotp ;\boldsymbol{\phi} )}(z)$ by $\Phi^{(N)}(z;\boldsymbol{\phi})$ shortly. 
It is worth noting that $\triangle X^{(N)}(z,\boldsymbol{\phi} )=0$ for any $\boldsymbol{\phi}\in\mathbb{T}^N$, which yields that $\Phi^{(N)}(z;\boldsymbol{\phi})$ becomes a holomorphic function. 
Hence, it is sufficient to minimize $||\Phi^{(N)}(\cdotp ;\boldsymbol{\phi})||_{L^\infty(\partial B_\rho)}$ with a given sufficiently close $\rho\in(0,1]$ to $1$. 
In particular, we discretize $||\Phi^{(N)}(\cdotp ;\boldsymbol{\phi})||_{L^\infty(\partial B_\rho)}$ by 
\begin{align}
    E=E(\boldsymbol{\phi})=\sum_{j=1}^N|\Phi^{(N)}(\rho z_j;\boldsymbol{\phi})|^2.
\end{align}
Hence, we deduce to minimize $E(\boldsymbol{\phi})$ in the class $\boldsymbol{\phi}\in\mathbb{T}^N$ by the following procedure.  
\begin{enumerate}
	\item Choose an initial vector $\boldsymbol{\phi}_1\in \mathbb{T}^N$ arbitrarily and a small enough step size $\eta\in(0,1)$.
	\item Set $\boldsymbol{\varphi}_1=\boldsymbol{\phi}_1$
	\item Update 
	\begin{align}
		\boldsymbol{\varphi}_{n+1}
		&=\boldsymbol{\phi}_n-\eta \nabla E(\boldsymbol{\phi}_n),\\
		\boldsymbol{\phi}_{n+1}
		&= \boldsymbol{\varphi}_n+\frac{n-1}{n+2} \boldsymbol{\phi}_n.
	\end{align}
\end{enumerate}
We can compute $\nabla E$ analytically as follows.
The gradient of $E(\boldsymbol{\phi}) $ with respect to $\phi_j$ is given by
\begin{align}
    \partial_{\phi_j}E
    =
    2\sum_{l=1}^N\mathrm{Re}\left(\partial_{\phi_j}\Phi^{(N)}(\rho z_l;\boldsymbol{\phi})\overline{\Phi^{(N)}(\rho z_l;\boldsymbol{\phi})}\right),
\end{align}
and the term $\partial_{\phi_j}\Phi^{(N)}(\rho z_l;\boldsymbol{\phi})$ can be computed using
\begin{align}
    \partial_{\phi_j}\Phi^{(N)}(\rho z_l;\boldsymbol{\phi})
    =
    2\sum_{i=1}^3\partial X_i^{(N)}(\rho z_l;\boldsymbol{\phi})\partial_{\phi_j}\partial X_i^{(N)}(\rho z_l;\boldsymbol{\phi}).
\end{align}
Moreover, we have
\begin{align}
    \partial_{\phi_j}\partial X_i^{(N)}(\rho z_l;\boldsymbol{\phi})
    =
    \sum_{k=1}^N\partial_{\phi_j}Q_{ik}(\boldsymbol{\phi}) \partial G(\rho z_l-\zeta_k).
\end{align}
Owing to Eq.~\eqref{eq:analytic_Q}, $\partial_{\phi_j}Q_{ik}(\boldsymbol{\phi}) $ can be obtained analytically as
\begin{align}
    \partial_{\phi_j}Q_{ik}(\boldsymbol{\phi}) =G_{kj}^{-1}b_i'(\phi_j).
\end{align}
Hence, we can obtain $\nabla E$ explicitly. 
\color{black}


\section{Convergence and error analysis}
\label{sec:error_analysis}
We first show the existence of an approximate solution for the minimization problem with a given precision $\varepsilon>0$. 
\begin{thm}
    \label{thm:existence_approx_surf}
	Let $\Gamma \subset \re^3$ be a rectifiable closed Jordan curve with a fixed homeomorphism $b:\partial B\to \Gamma$. 
	Suppose the Fourier series of $b_i$ is absolutely convergent for each $i=1,2,3$. 
	Let $X^{(N)}(\cdotp ;\boldsymbol{\phi} ):\bar B\to \re^3$ be the approximate surface for a given $N\in\ze_{\ge1}$ and a given $\boldsymbol{\phi}\in\mathbb{T}^N$. 
	Let $\Phi^{(N)}(\cdotp ;\boldsymbol{\phi} ):\bar B\to \ce $ be the complex dilatation of $X^{(N)}$. 
	\begin{enumerate}
		\item For any $\varepsilon>0$ and any sufficiently large $N\in\ze_{\ge1}$, there exists $\boldsymbol{\varphi}=\boldsymbol{\varphi}(\varepsilon,N) \in\mathbb{T}^N$ such that 
			\begin{align}
			\|\Phi^{(N)}(\cdotp;\boldsymbol{\varphi}) \|_{L^\infty(B)}<\varepsilon.
			\end{align}
			We call $\boldsymbol{\varphi}$ \textit{$\varepsilon$-conformal configulation}.
		\item If the Fourier coefficient $b_{i,n}$ of $b_i$ satisfies $b_{i,n}=\mathrm{O}(|n|^{-\alpha})$ for each $i=1,2,3$ and some $\alpha>2$, then for any sufficiently large $N\in\ze_{\ge1}$, there exists $\boldsymbol{\varphi}=\boldsymbol{\varphi}(N) \in\mathbb{T}^N$ such that 
			\begin{align}
			\|\Phi^{(N)}(\cdotp;\boldsymbol{\varphi}) \|_{L^\infty(B)}=\mathrm{O}(N^{-\alpha+1})
                \quad
                (N\to\infty). 
			\end{align}
		 \item If $b_i$ is real analytic for each $i=1,2,3$, then there exists $\boldsymbol{\varphi}=\boldsymbol{\varphi}(N) \in\mathbb{T}^N$ and constants $C>0$ and $\tau\in(0,1)$, independent of $N$, such that 
			\begin{align}
			\|\Phi^{(N)}(\cdotp;\boldsymbol{\varphi}) \|_{L^\infty(B)}\le C\tau^N.  
			\end{align}
	\end{enumerate}
\end{thm}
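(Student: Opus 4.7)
The plan is to produce the desired $\boldsymbol{\varphi}$ by sampling, at the collocation nodes, the conformal reparametrization of an \emph{actual} minimal surface spanned by $\Gamma$, and then to control $\Phi^{(N)}$ in terms of the $W^{1,\infty}$-MFS error. The three regimes of Theorem \ref{thm:MFS_error_estimate}, together with the $W^{1,\infty}$ extension noted immediately after it, then deliver the three conclusions.

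Concretely, by Theorem \ref{thm:exist_min} applied to the rectifiable curve $\Gamma$, there exists a minimal surface $X\in C^0(\bar B,\re^3)\cap C^2(B,\re^3)$ spanned by $\Gamma$, and in particular $\triangle X=0$ and $\Phi_X=0$. Since $X|_{\partial B}:\partial B\to\Gamma$ is a homeomorphism, we can factor $X|_{\partial B}=b\circ\phi_*$ for some homeomorphism $\phi_*:\partial B\to\partial B$. Setting $\boldsymbol{\varphi}=(\phi_*(z_j))_{j=1}^{N}$ makes $X^{(N)}(\cdot;\boldsymbol{\varphi})$ precisely the MFS approximation (in the sense of Section \ref{subsec:MFS}) of the harmonic function $X$ itself. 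Using $\Phi_X=0$, the difference-of-squares identity
\begin{align}
\Phi^{(N)}(z;\boldsymbol{\varphi})=\sum_{i=1}^{3}\bigl((\partial X_i^{(N)})^2-(\partial X_i)^2\bigr)=\sum_{i=1}^{3}\bigl(\partial X_i^{(N)}-\partial X_i\bigr)\bigl(\partial X_i^{(N)}+\partial X_i\bigr)
\end{align}
yields the reduction
\begin{align}
\|\Phi^{(N)}(\cdot;\boldsymbol{\varphi})\|_{L^\infty(B)}\le C\,\|X-X^{(N)}(\cdot;\boldsymbol{\varphi})\|_{W^{1,\infty}(B)},
\end{align}
with $C$ depending only on $\|\partial X\|_{L^\infty(B)}$ and on the (uniformly bounded) $W^{1,\infty}$-norm of $X^{(N)}$. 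Each statement then becomes a direct consequence of Theorem \ref{thm:MFS_error_estimate} applied componentwise: for (i), absolute convergence of the Fourier series of the boundary data gives $W^{1,\infty}$-convergence of the MFS; for (ii), the decay $b_{i,n}=\mathrm{O}(|n|^{-\alpha})$ produces the stated polynomial rate; for (iii), real-analyticity of $b_i$ produces the exponential bound $C\tau^N$.

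The main obstacle will be that the hypotheses are imposed on $b$, whereas Theorem \ref{thm:MFS_error_estimate} must be applied with boundary data $b\circ\phi_*$; one therefore has to verify that the assumed regularity of $b$ is inherited by $b\circ\phi_*$. For (iii) this is furnished by the classical boundary regularity of minimal surfaces over analytic curves, which forces $\phi_*$ to extend analytically across $\partial B$, so that $b\circ\phi_*$ is real analytic and Theorem \ref{thm:MFS_error_estimate}(1) applies. For (i) and (ii) the transfer must be argued via the Sobolev/H\"older regularity of $\phi_*$ coming from the conformality relation $\Phi_X=0$, together with composition estimates on the Fourier side. The strengthened assumption $\alpha>2$ in (ii), rather than the $\alpha>1$ appearing in Theorem \ref{thm:MFS_error_estimate}, reflects the fact that one derivative is lost when passing from the $L^\infty$-MFS estimate to the $W^{1,\infty}$-MFS estimate needed to control the quadratic expression $\Phi^{(N)}$.
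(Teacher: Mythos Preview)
Your approach is the same as the paper's: choose $\boldsymbol{\varphi}$ by sampling $\phi_*=b^{-1}\circ X|_{\partial B}$ for an actual minimal surface $X$, reduce $\|\Phi^{(N)}(\cdot;\boldsymbol{\varphi})\|_{L^\infty(B)}$ to the $W^{1,\infty}$-MFS error via the difference-of-squares identity, and invoke Theorem~\ref{thm:MFS_error_estimate} in each of the three regimes. You are in fact more scrupulous than the paper, which simply cites Theorem~\ref{thm:MFS_error_estimate} at the end without addressing the regularity-transfer issue (from $b$ to $b\circ\phi_*$) that you flag in your final paragraph.
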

\begin{proof}
	Take a minimal surface $X\in C^0(\bar B,\re^3)\cap C^2(B,\re^3)$ spanned by $\Gamma$ and define $\varphi:\partial B\to \partial B$ by $\varphi=b^{-1}\circ X|_{\partial B}$.
	In particular, we now have $\Phi_X=0$. 
	Setting $\boldsymbol{\varphi} =(\varphi(z_j))_{j=1}^N$, we show this is the desired object. 
	
	Since $X^{(N)}(\cdotp ;\boldsymbol{\varphi})$ is constructed by the MFS, we deduce from Theorem~\ref{thm:MFS_error_estimate} that for any $i=1,2,3$,
	\begin{align}
		\|\partial X_i-\partial X_i^{(N)}(\cdotp ;\boldsymbol{\varphi})\|_{L^\infty(B)}<1. 
	\end{align}
	Hence, we see that for each $z\in B$, 
	\begin{align}
		|\Phi^{(N)}(z;\boldsymbol{\varphi})|
		&=|\Phi_X(z)-\Phi^{(N)}(z;\boldsymbol{\varphi})|\\
		&\leq \sum_{i=1}^3 |\partial X_i(z)^2-\partial X_i^{(N)}(z;\boldsymbol{\varphi})^2| \\
		&\leq \sum_{i=1}^3 |\partial X_i(z)-\partial X_i^{(N)}(z;\boldsymbol{\varphi})|(2|\partial X_i(z)|+|\partial X_i(z)-\partial X_i^{(N)}(z;\boldsymbol{\varphi})|)\\
		&\leq \sum_{i=1}^3 \|\partial X_i-\partial X_i^{(N)}(\cdotp ;\boldsymbol{\varphi})\|_{L^\infty(B)}(2\|\partial X_i\|_{L^\infty(B)}+\|\partial X_i-\partial X_i^{(N)}(\cdotp ;\boldsymbol{\varphi})\|_{L^\infty(B)})\\
		&\leq C \|\partial X_i-\partial X_i^{(N)}(\cdotp ;\boldsymbol{\varphi})\|_{L^\infty(B)}. 
	\end{align}
	Hence, the decay of $\|\Phi^{(N)}(\cdotp;\boldsymbol{\varphi}) \|_{L^\infty(B)}$ follows directly from the decay of $\|\partial X_i-\partial X_i^{(N)}(\cdotp ;\boldsymbol{\varphi})\|_{L^\infty(B)}$ given in  Theorem~\ref{thm:MFS_error_estimate}. 
\end{proof}

We next see that the approximate surface for an $\varepsilon$-conformal configuration gives a minimal surface as $N\to\infty$. 
\begin{thm}
    \label{thm:Dirichlet_energy_mean_curvature}
	Let $\Gamma \subset \re^3$ be a rectifiable closed Jordan curve with a fixed homeomorphism $b:\partial B\to \Gamma$. 
	Suppose the Fourier series of $b_i$ is absolutely convergent for each $i=1,2,3$. 
	Let $\boldsymbol{\varphi} \in\mathbb{T}^N$ be an $\varepsilon$-conformal configuration for a given $\varepsilon>0$ and a sufficiently large $N\in\ze_{\ge1}$. 
	Let $X^{(N)}(\cdotp ;\boldsymbol{\varphi} ):\bar B\to \re^3$ be the approximate surface. 
	Then, there exists a minimal surface $X\in C^0(\bar B,\re^3)\cap C^2(B,\re^3)$ spanned by $\Gamma$ and a subsequence $\{X^{(N_p)}\}_{p\ge1}$ such that 
	\begin{align}
		\lim_{p\to \infty} D(X^{(N_p)})=D(X). 
	\end{align}
	Moreover, if $X$ is non-singular, i.e., $\det(\partial_i X\cdotp \partial_j X)\ne0$, there exists a constant $C>0$, independent of $p$ and $\varepsilon$, such that
	\begin{align}
		 \|H_{X^{(N_p)}}\|_{L^\infty(B)}<C\varepsilon. 
	\end{align}
\end{thm}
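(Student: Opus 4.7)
First, I would establish compactness. Each $X^{(N)}(\cdot;\boldsymbol{\varphi})$ is harmonic on $B$, so the maximum principle bounds $\|X^{(N)}\|_{L^\infty(\bar B)}$ by $\max_\Gamma|b|$ uniformly in $N$. Its boundary trace interpolates $b\circ\boldsymbol{\varphi}$ at the $N$ collocation points; combining this with the MFS interpolation remarks following Theorem~\ref{thm:MFS_error_estimate} and the absolute convergence of the Fourier series of $b$, I would show that along a subsequence the boundary curves $\Gamma_N:=X^{(N)}(\partial B)$ converge in Fréchet sense to $\Gamma$. Invoking Theorem~\ref{thm:exist_lim} then produces a further subsequence $X^{(N_p)}$ converging uniformly on $\bar B$ to some harmonic $X\in C^0(\bar B,\re^3)\cap C^2(B,\re^3)$ with $X(\partial B)=\Gamma$.

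Next, I would identify $X$ as a minimal surface. Interior gradient estimates for harmonic maps give $\partial X^{(N_p)}\to\partial X$ uniformly on compact subsets of $B$, so $\Phi^{(N_p)}\to\Phi_X$ locally uniformly. Combined with $\|\Phi^{(N_p)}\|_{L^\infty(B)}<\varepsilon$ and the maximum modulus principle for the holomorphic $\Phi_X$, this yields $\|\Phi_X\|_{L^\infty(B)}\le\varepsilon$. A diagonal extraction over $\varepsilon_k\to 0$, legitimate by Theorem~\ref{thm:existence_approx_surf}, upgrades this to $\Phi_X=0$, so that $X$ is a genuine minimal surface. Dominated convergence together with uniform $L^\infty$-bounds on $\partial X^{(N_p)}$ then gives $D(X^{(N_p)})\to D(X)$.

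For the mean curvature bound, I would begin with the identity
\begin{align}
\det g_{X^{(N_p)}} = 4\bigl(|\partial X^{(N_p)}|^4 - |\Phi^{(N_p)}|^2\bigr),
\end{align}
which, combined with the non-singularity hypothesis on $X$, yields a uniform positive lower bound on $\det g_{X^{(N_p)}}$ for $p$ large and $\varepsilon$ small. Expanding $H_{X^{(N_p)}}\,n = \triangle_{X^{(N_p)}} X^{(N_p)}$ through the Laplace--Beltrami formula, the second-order piece decomposes into a scalar multiple of $\triangle X^{(N_p)}=0$ plus an algebraic term proportional to $|\Phi^{(N_p)}|\le\varepsilon$, while the first-order piece involves $\partial_i g_{jk}$; since each $g_{jk}$ is algebraically expressible through $|\partial X^{(N_p)}|^2$ and $\Phi^{(N_p)}$, these derivatives reduce to $\partial\Phi^{(N_p)}$ plus terms that vanish by harmonicity. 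Cauchy estimates applied to the holomorphic function $\Phi^{(N_p)}$ give $|\partial\Phi^{(N_p)}|\le C\varepsilon$ on compact subsets, whence $\|H_{X^{(N_p)}}\|_{L^\infty(B)}\le C\varepsilon$.

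The main obstacle will be verifying the Fréchet convergence $\Gamma_N\to\Gamma$ needed for Theorem~\ref{thm:exist_lim}: one must show that $X^{(N)}|_{\partial B}$ is an injective, weakly monotone continuous parametrization lying uniformly close to one of $\Gamma$, which requires a quantitative MFS boundary estimate beyond the interior statements quoted in the preliminaries. Reconciling the fixed $\varepsilon>0$ in the hypothesis with the $\Phi_X=0$ conclusion — handled via the diagonal argument above — is the other delicate point; once the limit is established as a minimal surface, the remaining algebraic decomposition of $\triangle_{X^{(N_p)}} X^{(N_p)}$ is routine.
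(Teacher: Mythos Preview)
Your strategy for the first conclusion---extract a uniformly convergent subsequence via Theorem~\ref{thm:exist_lim}, pass the $\varepsilon$-bound on $\Phi^{(N)}$ to the limit, and deduce convergence of the Dirichlet energy---is essentially the paper's argument. You are also right to flag the ``$\varepsilon$ arbitrary'' step; the paper resolves it in the same spirit as your diagonal extraction.

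The mean-curvature part is where you diverge from the paper and where there is a genuine gap. The paper never expands the Laplace--Beltrami operator; it works directly with
\[
H_{X^{(N)}}=\frac{g_{11}^{(N)}h_{22}^{(N)}+g_{22}^{(N)}h_{11}^{(N)}-2g_{12}^{(N)}h_{12}^{(N)}}{2\det g^{(N)}},
\]
writes $|H_{X^{(N)}}|=|H_X-H_{X^{(N)}}|$ since $H_X=0$, and reduces the difference via $ac-bd=\tfrac12[(a+b)(c-d)+(a-b)(c+d)]$ to $\|g_{ij}-g_{ij}^{(N)}\|_{L^\infty(B)}$ and $\|h_{ij}-h_{ij}^{(N)}\|_{L^\infty(B)}$, both handled by the $W^{m,\infty}$ convergence recorded after Theorem~\ref{thm:MFS_error_estimate}. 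No derivative of $\Phi^{(N)}$ enters.

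Your route introduces the first-order Christoffel terms and tries to bound them through $\partial\Phi^{(N_p)}$. Two problems arise. First, the assertion that $\partial_i g_{jk}$ reduces to $\partial\Phi^{(N_p)}$ ``plus terms that vanish by harmonicity'' is not correct: the conformal factor contributes $\partial_i|\partial X^{(N_p)}|^2=2\langle\partial_i\partial_kX^{(N_p)},\partial_kX^{(N_p)}\rangle$, which does not vanish. Second, Cauchy estimates for the holomorphic $\Phi^{(N_p)}$ only control $\partial\Phi^{(N_p)}$ on compact subsets of $B$, with constants blowing up at $\partial B$, so the stated $L^\infty(B)$ bound does not follow. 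The observation that dissolves the whole difficulty is that, because $\triangle X^{(N)}=0$, one has $h_{11}^{(N)}+h_{22}^{(N)}=\langle\triangle X^{(N)},e^{(N)}\rangle=0$; hence the numerator above equals $(g_{22}^{(N)}-g_{11}^{(N)})h_{11}^{(N)}-2g_{12}^{(N)}h_{12}^{(N)}$, and both $g_{22}^{(N)}-g_{11}^{(N)}$ and $g_{12}^{(N)}$ are $O(\varepsilon)$ directly from $|\Phi^{(N)}|<\varepsilon$. Equivalently, in your formulation, the Christoffel contribution to $\triangle_{g^{(N)}}X^{(N)}$ is purely tangential by the Gauss formula and contributes nothing to $|H_{X^{(N)}}|$, so estimating $\partial\Phi^{(N_p)}$ is unnecessary.
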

\begin{proof}
	Theorem~\ref{thm:MFS_error_estimate} gives the uniform convergence of $\Gamma_N=X^{(N)}(\partial B)$ to $\Gamma$. 
	Hence, applying Theorem~\ref{thm:exist_lim} to $\{X^{(N)}\}$, we obtain the existence of a subsequence $\{X^{(N_p)}\}_{p\ge1}$ and $X\subset C^0(\bar B,\re^3)\cap C^2(B,\re^3)$ with $\triangle X=0$ and $X(\partial B)=\Gamma$ such that $X^{(N_p)}\to X$ uniformly on $\bar B$ as $p\to\infty$. 
	Moreover, owing to the maximum principle, we see that there exists a constant $C>0$, independent $p$, such that 
	\begin{align}
		\| X- X^{(N_p)}\|_{W^{1,\infty}(B)}
		\le C \| X- X^{(N_p)}\|_{L^{\infty}(\partial B)}\to 0 
		\quad\text{as }p\to \infty. 
	\end{align}
	We thus deduce that 
	\begin{align}
		\|\Phi_X\|_{L^\infty(B)}
		&\le \|\Phi_X-\Phi \|_{L^\infty(B)}+\|\Phi_{X^{(N)}}\|_{L^\infty(B)}\\
		&\le C\| X- X^{(N_p)}\|_{W^{1,\infty}(B)}+\|\Phi_{X^{(N)}}\|_{L^\infty(B)}\\
		&\le 2\varepsilon.
	\end{align}
	Since $\varepsilon>0$ is arbitrary, we obtain $\|\Phi_X\|_{L^\infty(B)}=0$, which yields that $X$ is a minimal surface spanned by $\Gamma$. 
	We now have
	\begin{align}
		|D(X)-D(X^{(N_p)}|
		&\le\| X- X^{(N_p)}\|_{W^{1,2}(B)}\\
		&\le C\| X- X^{(N_p)}\|_{W^{1,\infty}(B)}\\
		&\le C \| X- X^{(N_p)}\|_{L^{\infty}(\partial B)}\to 0 
		\quad\text{as }p\to \infty.  
	\end{align}

	Lastly, we examine the convergence of the mean curvature.
	Define the first fundamental forms $g_{ij}^{(N)}$ as
    \begin{align}
        g_{ij}^{(N)}\coloneqq\langle\partial_iX^{(N)},\partial_jX^{(N)}\rangle,
        \quad
        i,j=1,2.
    \end{align}
    Since $\|\Phi_{X^{(N)}}\|_{L^\infty(B)}<\varepsilon$ holds, we have
    \begin{align}
        \|g_{11}^{(N)}-g_{22}^{(N)}\|_{L^\infty(B)}\le C\varepsilon,
        \quad
        \|g_{12}^{(N)}\|_{L^\infty(B)}\le C\varepsilon
    \end{align}
    for sufficiently large $N$.
    Here and hereafter, $C$ is a positive constant that can change with each appearance and does not depend on $N$.
    Since $g=(g_{ij})$ is non-singular, for sufficiently large $N$, we have
    \begin{align}
        \det g^{(N)}\ge C.
    \end{align}
    
    A pointwise error of the mean curvature is given by
    \begin{align}
        |H_{X}^{(N)}|
        &=
        |H_X-H_{X^{(N)}}|\\
        &=
        \left|
            \frac{g_{11}h_{22}+g_{22}h_{11}-2g_{12}h_{12}}{2\det g}
            -
            \frac{g_{11}^{(N)}h_{22}^{(N)}+g_{22}^{(N)}h_{11}^{(N)}-2g_{12}^{(N)}h_{12}^{(N)}}{2\det g^{(N)}}
        \right|,
    \end{align}
    where
    \begin{align}
        h_{ij}^{(N)}=\langle\partial_i\partial_jX^{(N)},e^{(N)}\rangle,
        \quad
        e^{(N)}=\frac{\partial_1X^{(N)}\times\partial_2X^{(N)}}{\|\partial_1X^{(N)}\times\partial_2X^{(N)}\|}.
    \end{align}
    By repeatedly applying the very simple equation
    \begin{align}
        ac-bd=\frac12\left[(a+b)(c-d)+(a-b)(c+d)\right],
        \label{eq:simple}
    \end{align}
    the error in the mean curvature can be estimated by evaluating $\|g_{ij}-g_{ij}^{(N)}\|_{L^\infty(B)}$ and $\|h_{ij}-h_{ij}^{(N)}\|_{L^\infty(B)}$.
    Since the estimate of $\|g_{ij}-g_{ij}^{(N)}\|_{L^\infty(B)}$ have already been obtained, we will consider $\|h_{ij}-h_{ij}^{(N)}\|$.
    By applying the product read as inner product in equation \eqref{eq:simple} and the Cauchy--Schwarz inequality, we have
    \begin{align}
        |h_{ij}-h_{ij}^{(N)}|
        &\le
        \frac{1}{2}\left[
            |\langle\partial_i\partial_j X-\partial_i\partial_jX^{(N)},e+e^{(N)}\rangle|
            +
            |\langle\partial_i\partial_jX+\partial_i\partial_jX^{(N)},e-e^{(N)}\rangle|
        \right]\\
        &\le
        \frac12\left[
            |\partial_i\partial_jX-\partial_i\partial_jX^{(N)}||e+e^{(N)}|
            +
            |\partial_i\partial_jX+\partial_i\partial_jX^{(N)}||e-e^{(N)}|
        \right]\\
        &\le
        \frac{3}{2}\left[
            \|\partial_i\partial_jX-\partial_i\partial_jX^{(N)}\|_{L^\infty(B)}\|e+e^{(N)}\|_{L^\infty(B)}
        \right.\\
        &\hspace{100pt}
        \left.
            +
            \|\partial_i\partial_jX+\partial_i\partial_jX^{(N)}\|_{L^\infty(B)}\|e-e^{(N)}\|_{L^\infty(B)},
        \right]
    \end{align}
    where $\|F\|_{L^\infty(B)}$ for an $\mathbb{R}^3$-valued function $F=(F_1,F_2,F_3)$ is defined as
    \begin{align}
        \|F\|_{L^\infty(B)}=\max\{\|F_1\|_{L^\infty(B)},\|F_2\|_{L^\infty(B)},\|F_3\|_{L^\infty(B)}\}.
    \end{align}
    Since $\|\partial_i\partial_jX-\partial_i\partial_jX^{(N)}\|_{L^\infty(B)}$ and $\|e-e^{(N)}\|_{L^\infty(B)}$ converge to $0$ as $N\to\infty$, we see that
    \begin{align}
        \|h_{ij}-h_{ij}^{(N)}\|_{L^\infty(B)}\le C\varepsilon
    \end{align}
    for sufficiently large $N$.
    Combining these estimates, we conclude that 
    \begin{align}
        \|H_{X^{(N)}}\|_{L^\infty(B)}<C\varepsilon
    \end{align}
    holds for sufficiently large $N$.
\end{proof}
\color{black}

\section{Numerical examples}
\label{sec:numerics}

In this section, we show several results of numerical experiments, which exemplify the effectiveness of our method.
We briefly explain how we obtain an $\varepsilon$-conformal configuration $\phi^{(N)}$.
In what follows, for several rectifiable closed Jordan curves $b:\theta\in \partial B\to \Gamma\subset \re^3$, we perform numerical experiments to describe the behavior of the proposed optimization method.
All were carried out by using Julia 1.8.0 on a machine with 3.2 GHz Apple M1 Ultra 20 cores, 128 GB memory, OS X 12.5.1. 
In every computation, the Nesterov iteration is performed $10^5$ times.

\subsection{Jordan domain in the plane}
As a first example, consider the case where the curve $\Gamma$ is embedded in the plane. 
In this case, the problem of finding the minimal surface is nothing more than finding an isometric map from the unit disk onto the Jordan domain bounded by $\Gamma$.

We compute conformal mappings for the ellipse $b(\theta)=(2\cos\theta,\sin\theta,0)$ and the Cassini oval $b(\theta)=(r(\theta)\cos\theta,r(\theta)\sin\theta,0)$, where 
\begin{align}
	r(\theta)= \sqrt{\cos2\theta+\sqrt{1.1^4-\sin^22\theta}}.
\end{align}
For both cases, we choose equidistant 150 points as initial data.
It takes 124.28 seconds for the ellipse and 153.29 seconds for the oval to complete the computation. 
As a result, $\varepsilon$-conformal configurations in Fig.~\ref{fig:ellipse} and Fig.~\ref{fig:oval} are obtained. 
The contour plot of the dilatation $|\Phi_{X^{(N)}}(z)|$ in (b) of the figures confirms the accuracy of the computations. 
Since dilatation is now a holomorphic function, it should obey the maximum principle. 
For both figures, we see some focusing nodes around the circle $\{|z|=0.7\}$. 
In the inner side of the focusing nodes, the dilatation is smaller than $10^{-10}$. 

Ideally, the number of the focusing nodes equals $N$ and they are placed on circle $\{|z|=\rho\}$. 
However, since the computation is carried out with $\rho=0.87$, the focusing nodes are formed inside $\{|z|=\rho\}$.
In addition, the numbers of nodes are 64 for the ellipse and 52 for the oval, which is less than $N=150$. 
Hence, the structure of the focusing nodes yields the discrete version of the maximum principle, but it is not the $N$-points on circle $\{|z|=\rho\}$. 
\begin{figure}[htbp]
\begin{center}
\includegraphics[scale=0.3]{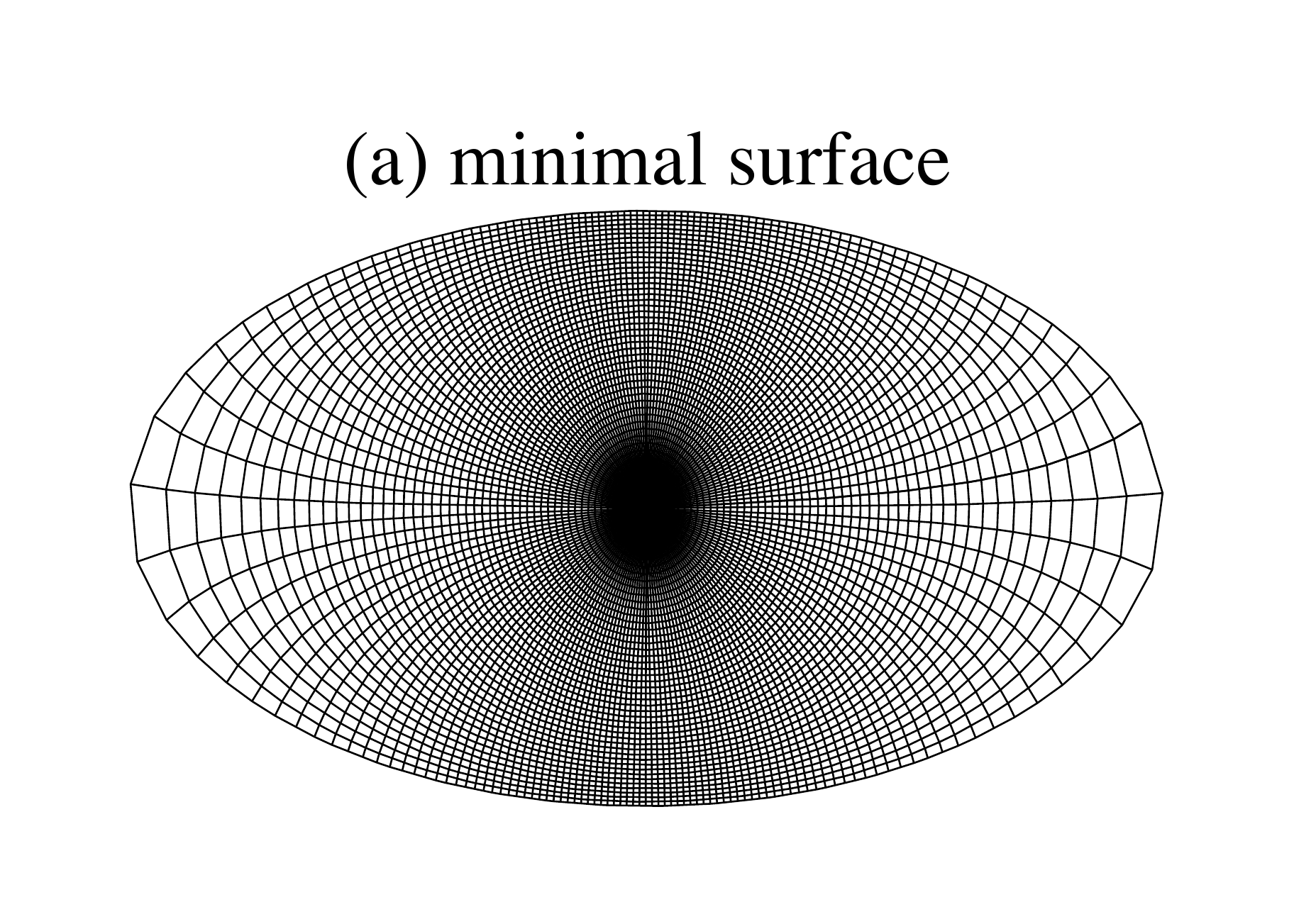}
\includegraphics[scale=0.3]{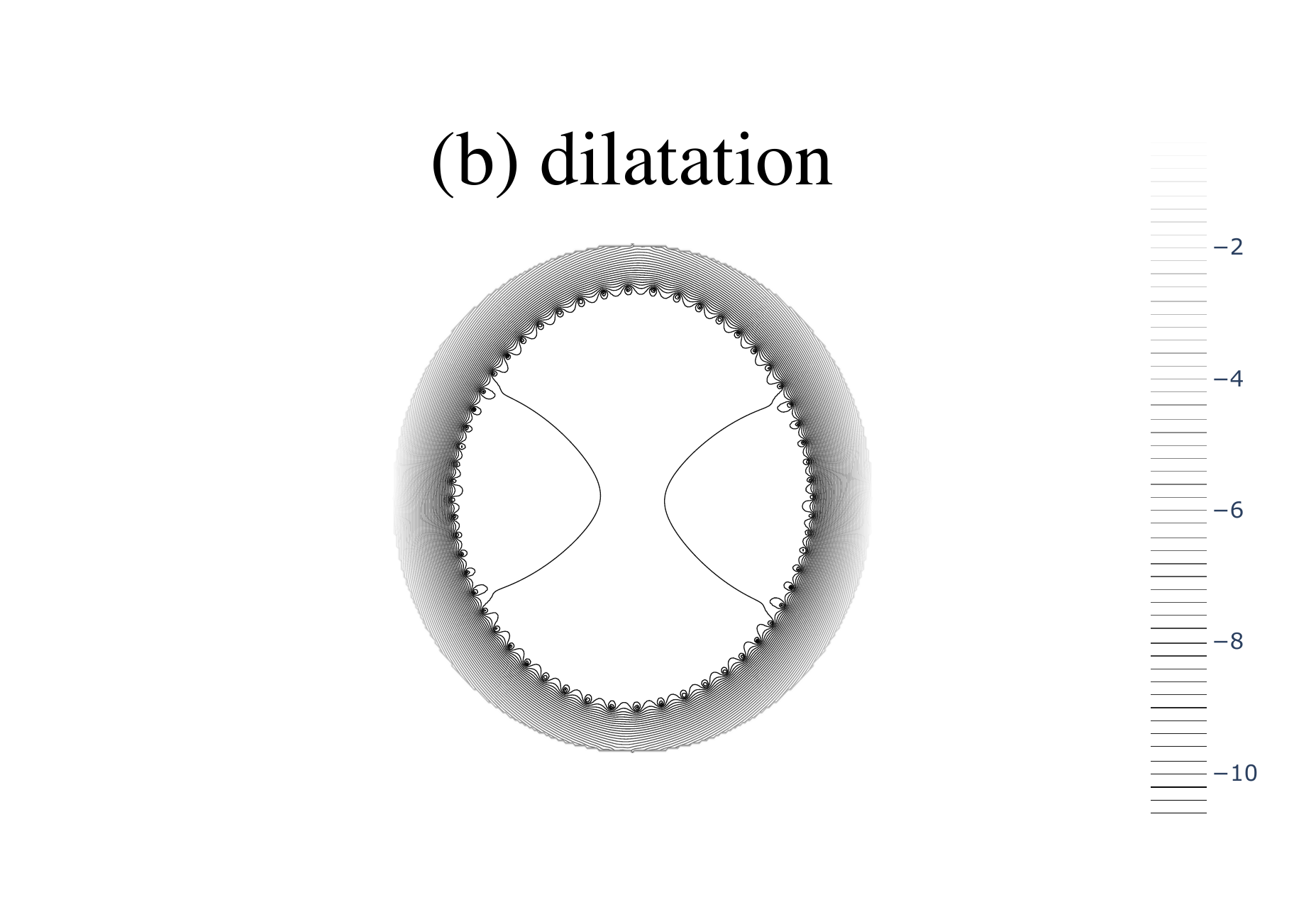}
\includegraphics[scale=0.3]{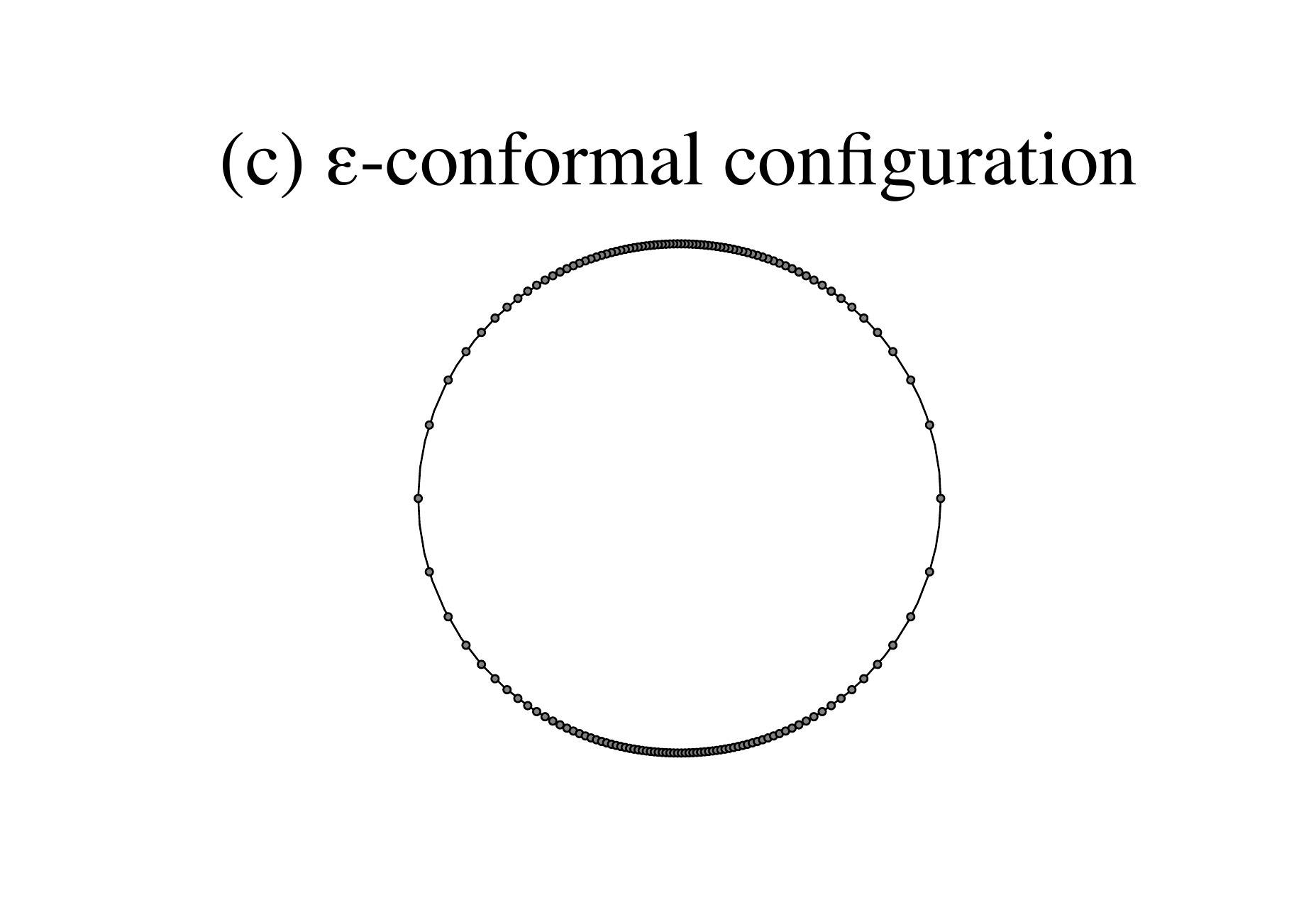}
\end{center}
\caption{Ellipse.}
\label{fig:ellipse}
\end{figure}

\begin{figure}[htbp]
\begin{center}
\includegraphics[scale=0.3]{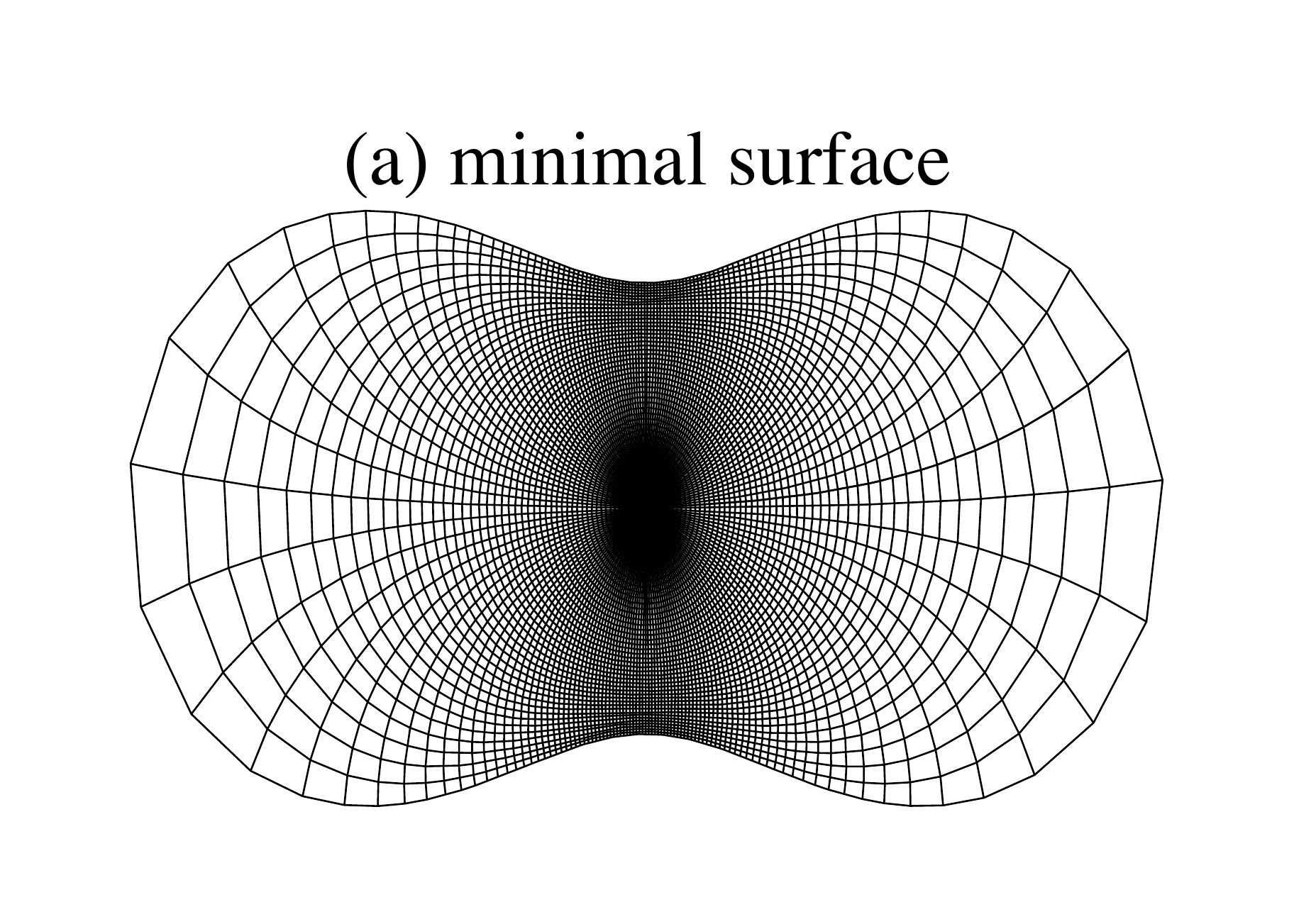}
\includegraphics[scale=0.3]{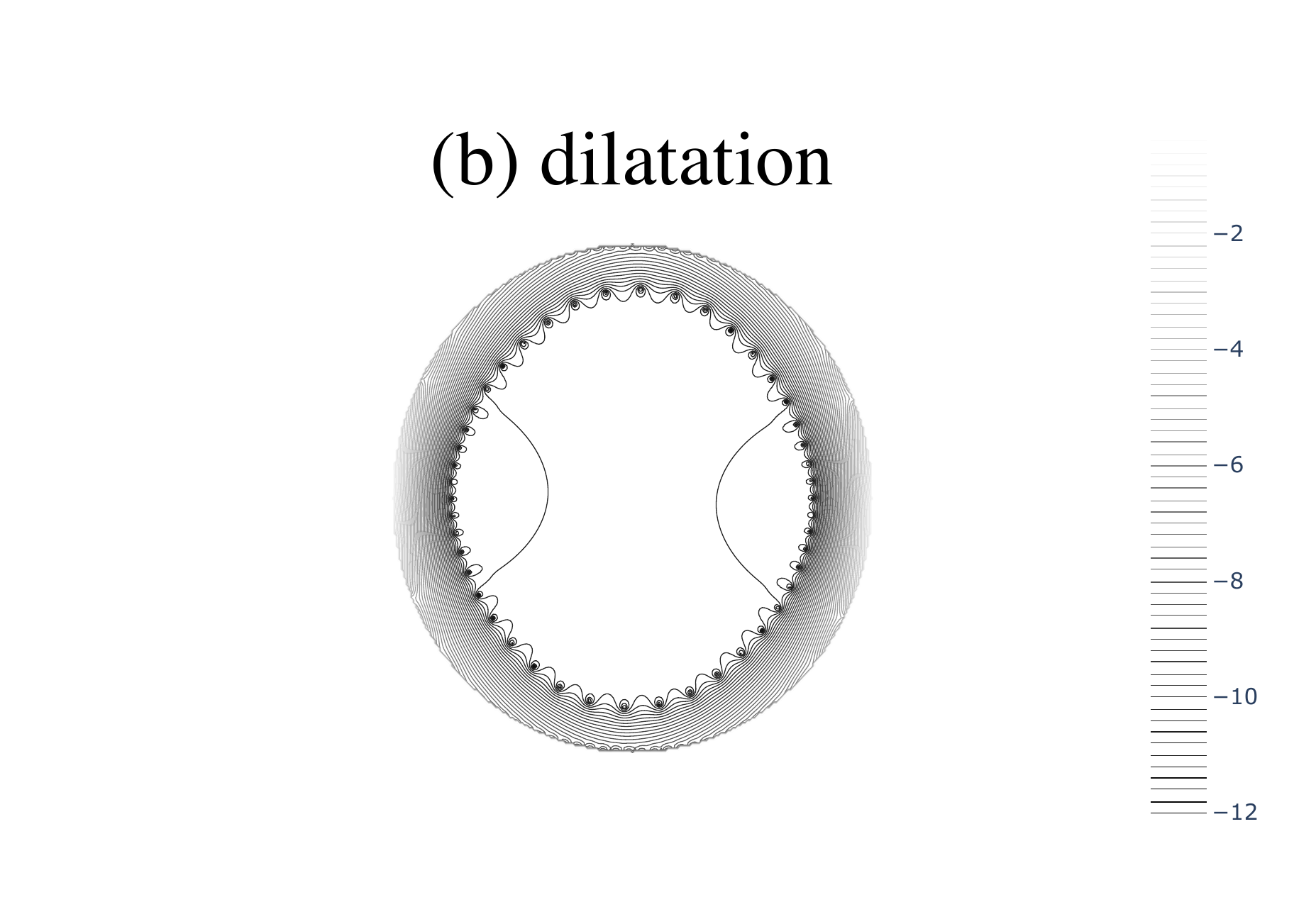}
\includegraphics[scale=0.3]{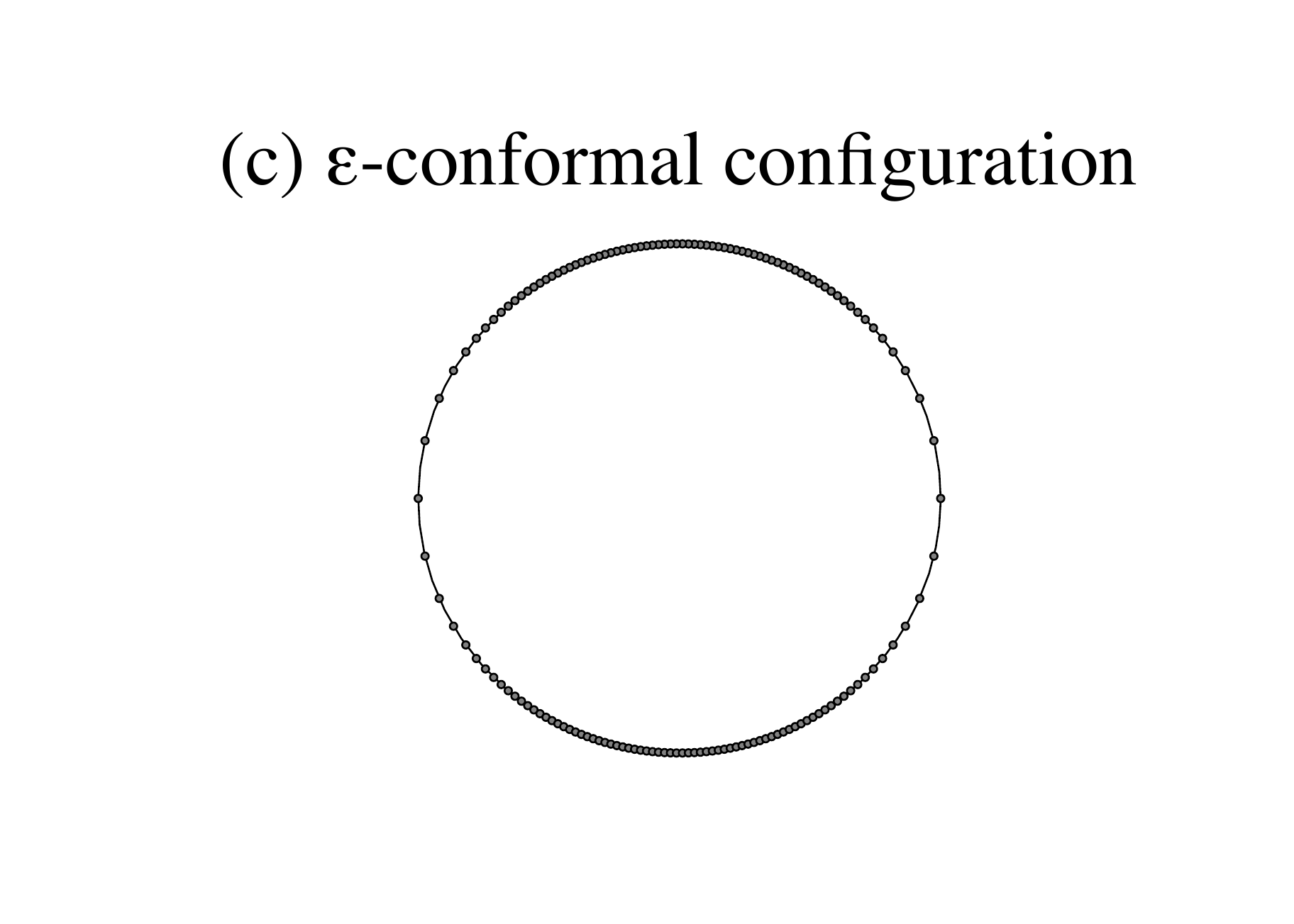}
\end{center}
\caption{Cassini oval.}
\label{fig:oval}
\end{figure}

\subsection{Crown}
Computations not included in the planar domain are as accurate and fast as those included.
A minimal surface spanned by a crown-shaped curve $b(\theta)=(\cos\theta,\sin\theta, 0.3\sin n\theta)$ is demonstrated in Fig.~\ref{fig:crown5} for $n=5$.
The focusing nodes are on circle $\{|z|=0.80\}$ with $\rho=0.9$, and the dilatation inside the focusing nodes is smaller than $10^{-10}$. 
It costs 143.71 seconds in our computation when we choose equidistant 150 points as initial data.
It is worth noting that the focusing nodes form two distinct regular polygons: $8$-polygon inside and $10$-polygon outside. 
If we reduce the number of times of the Nesterov iteration from $10^5$ to $10^4$, we still have the same result, which implies the optimization is so fast and the polygonal structure of the focusing nodes is an intrinsic nature of the crown-shaped curve. 
\begin{figure}[htbp]
\begin{center}
\includegraphics[scale=0.3]{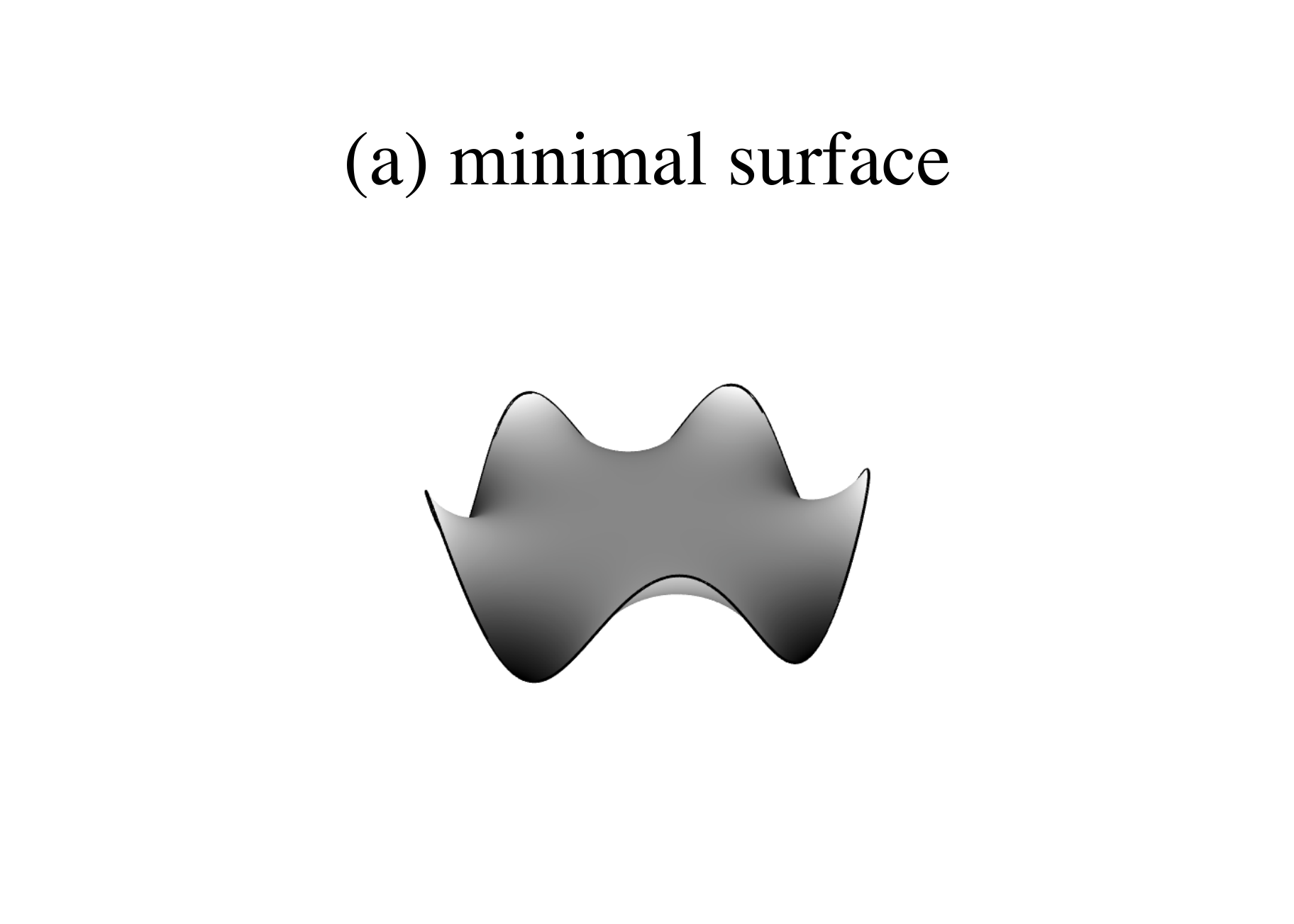}
\includegraphics[scale=0.3]{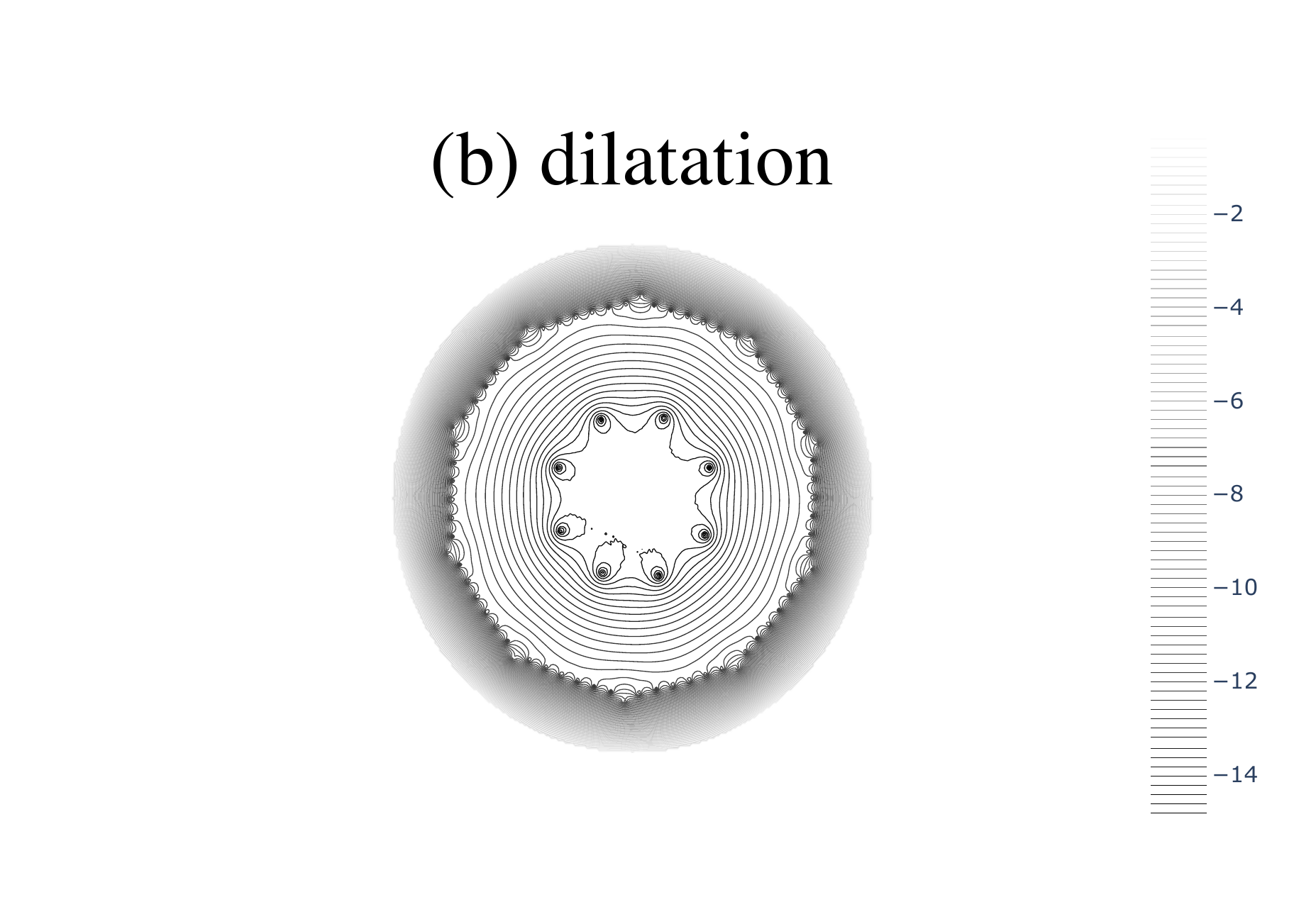}
\includegraphics[scale=0.3]{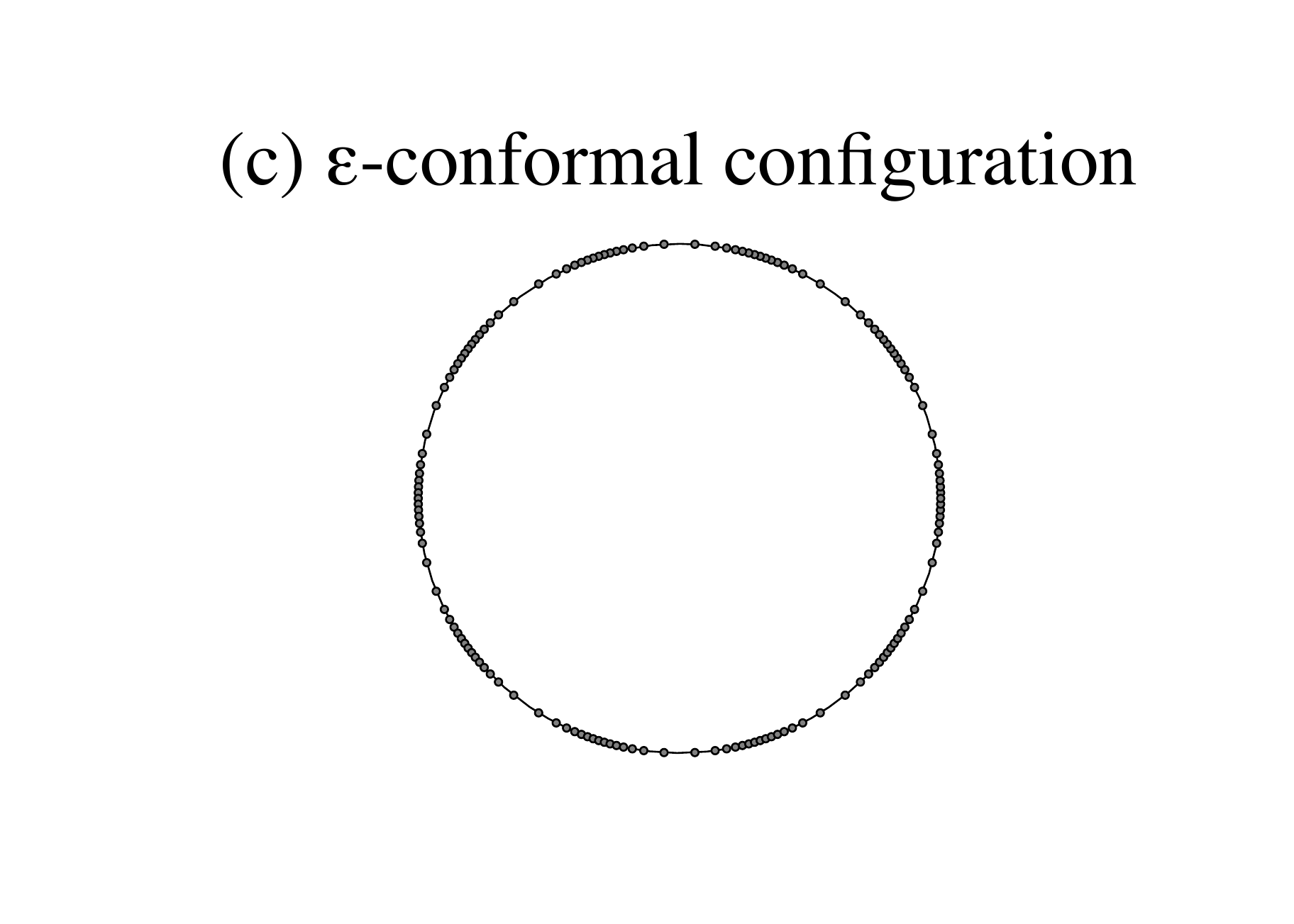}
\end{center}
\caption{Crown for $n=5$.}
\label{fig:crown5}
\end{figure}
 
\subsection{Torus knot}
Even if the resulting minimal surfaces can not be embedded but immersed, and even if a given boundary curve is knotting, we can compute them with the same accuracy and speed. 
The torus knot $b(\theta)=((2+\cos q\theta)\cos p\theta,(2+\cos q\theta)\sin p\theta, -\sin q\theta)$ is a knot and gives such a minimal surface. 
Fig.~\ref{fig:knot32} shows the result for $(3,2)$-torus knot.
Taking equidistant 150 points as initial data, we obtain the result in Fig.~\ref{fig:knot32} in 156.88 seconds for $(3,2)$-torus knot with $\rho=0.85$. 
The focusing nodes are placed on $\{|z|=0.70\}$ with $10^{-10}$ accuracy. 

\begin{figure}[htbp]
\begin{center}
\includegraphics[scale=0.3]{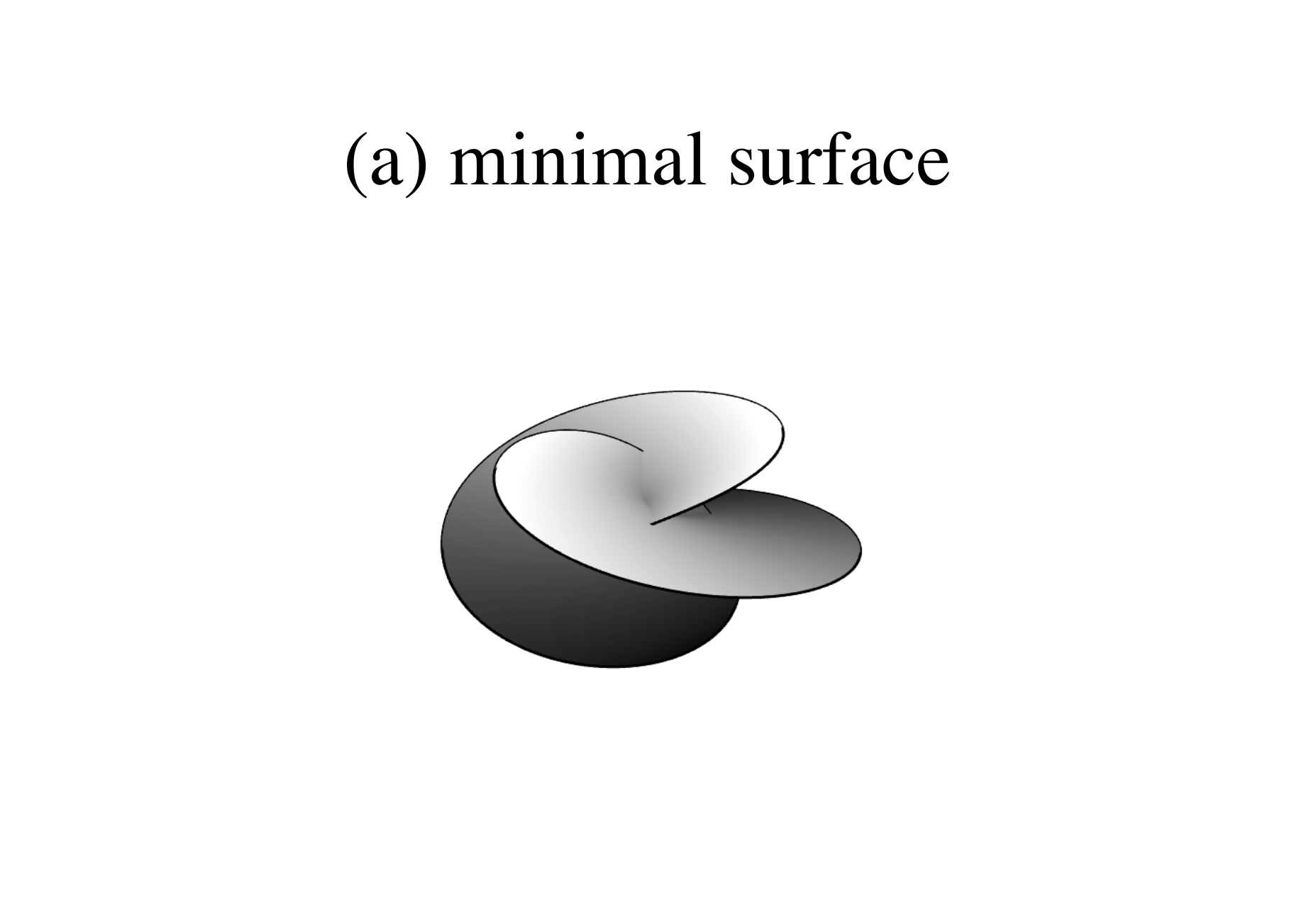}
\includegraphics[scale=0.3]{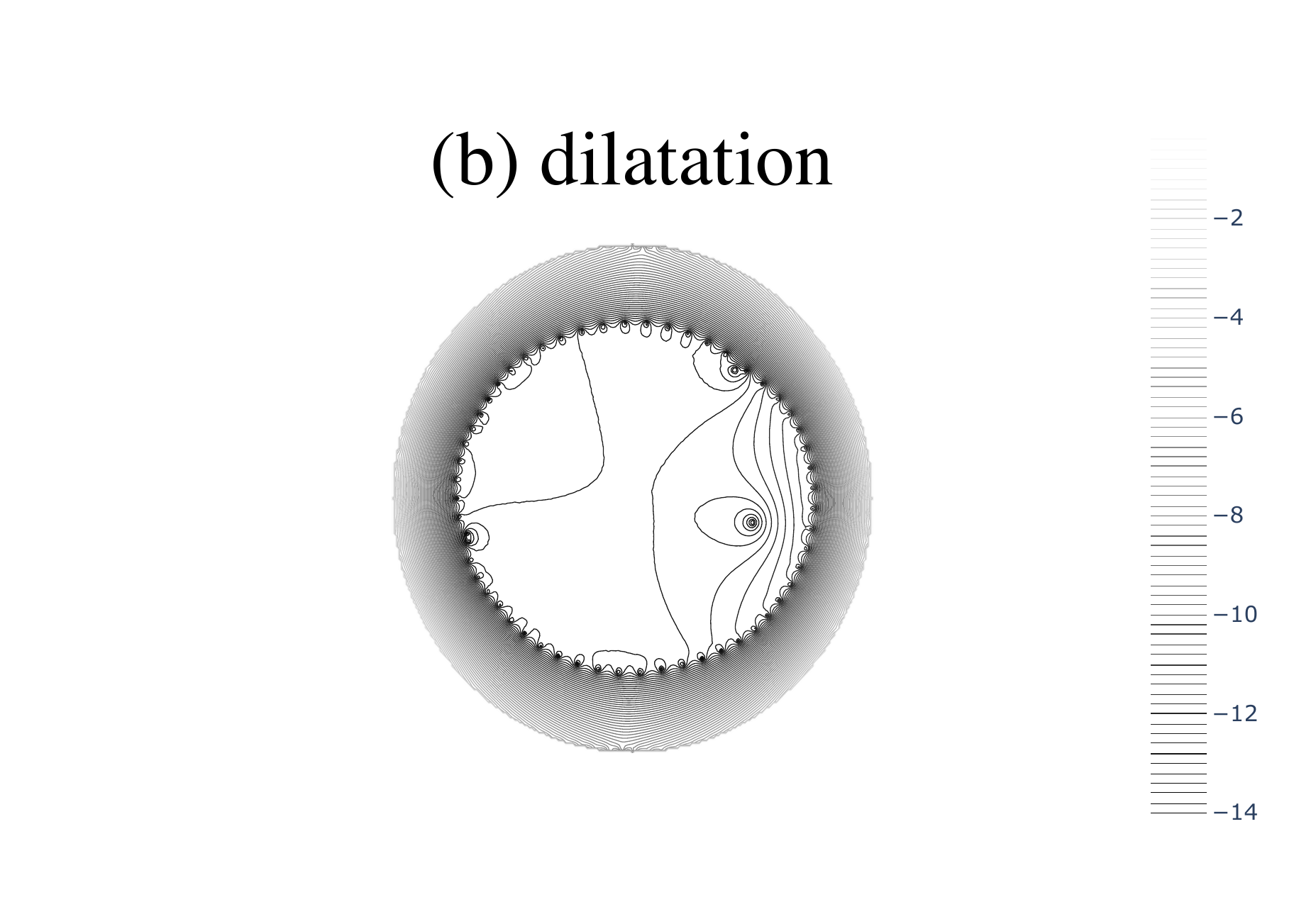}
\includegraphics[scale=0.3]{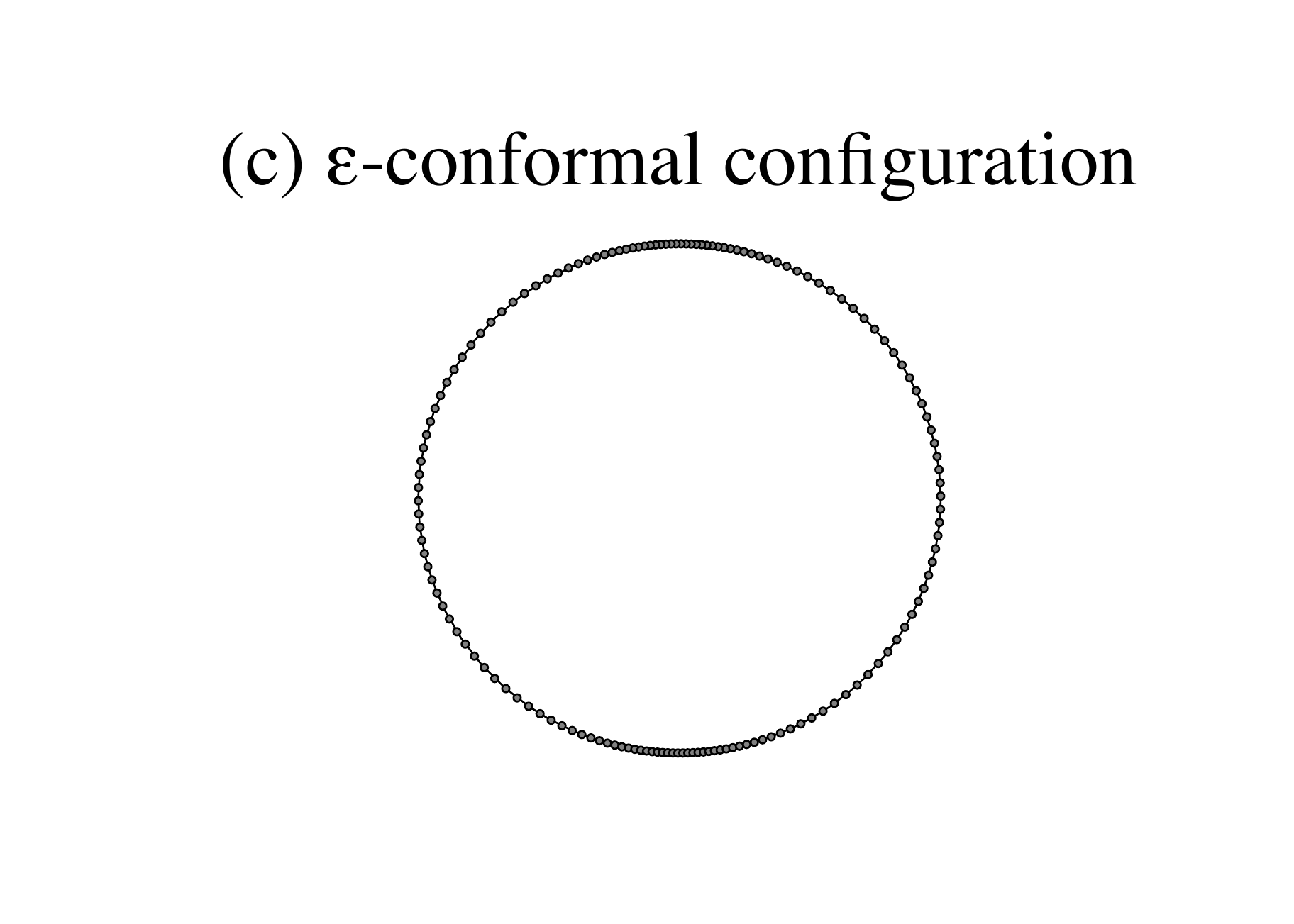}
\end{center}
\caption{Knot for $(p,q)=(3,2)$.}
\label{fig:knot32}
\end{figure}

\section{Searching methods for all solutions}
\label{sec:search_all}

We finally propose two methods to search for all solutions to the Plateau problem for a given boundary curve. 
We demonstrate these methods by taking the Enneper wire as an example, and the Enneper wire is a closed Jordan curve given by 
\begin{align}
	b_1(\theta) &= r\cos\theta-\frac{r^{3}}{3}\cos 3\theta,\\
	b_2(\theta) &= -r\sin\theta-\frac{r^{3}}{3}\sin 3\theta,\\
	b_3(\theta) &= r^{2}\cos 2\theta.
\end{align}
for $r\in(0,\sqrt{3})$. 
The Enneper wire comes from the boundary of the Enneper surface, which is an exact solution to the Plateau problem. 
The homeomorphism $\phi:\partial B\to \partial B$ is then the identity map on $\partial B$. 
Hence, the equidistant point set is the $\varepsilon$-conformal configuration corresponding to this exact solution.

Fig.~\ref{fig:exact_enneper} shows the numerical solution for the equidistant points corresponding to the Enneper surface. 
We see no focusing nodes in the contour plot of the dilatation, which implies that focusing nodes cannot be formed without optimization.
It is known that the Enneper wire bounds two distinct minimal surfaces other than the Enneper surface. 

Our goals in this section are to find solutions and to confirm that there is no other solution than what we have found.
First, let us search for solutions in a unified manner as follows. 
We take a one-parameter family of initial configurations $\bm{\phi}^{(N)}: s\in (S_1,S_2)\to \boldsymbol{\phi}^{(N)}(s)\in\mathbb{T}^N$ with 
\begin{align}
	\phi_j^{(N)}(s) = \frac{2\pi(j-1)}{N}+s \sin\frac{2\pi m(j-1)}{N}
\end{align}
for $s>0$ and $m\in\ze$, and $\phi^{(N)}$ is regarded as an associated configuration of a perturbed identity map on $\partial B$ by a Fourier mode $ s\sin mx$. 
Fig.~\ref{fig:all_ini_conf} indicates that each initial configurations converges to either of the three minimizers for $s\in\{-2.95,-2.9,\ldots,2\}$ and $m=2$. 

Since $\boldsymbol{\phi}^{(N)}(s)$ varies continuously for $s$, the value of the Dirichlet energy also varies continuously.
Hence, we can distinguish the minimizers from connected components of the value of the Dirichlet energy as $s$ varies. 
Fig.~\ref{fig:dirichlet} shows the value of Dirichlet energy for each $s$. 
The color of the points is painted according to the accuracy of $E$. 
We see that the distribution of the values is well-organized if $E$ is more accurate than 5-digit precision; otherwise, it is scattered. 
Therefore, it is suggested that the numerical solutions must be more accurate than at least 5-digit precision to distinguish minimizers. 

We pick out three representatives of the connected components with 5-digit precision, choosing $s=-1,0,1$. 
Fig.~\ref{fig:repr_ini_conf} shows their initial and $\varepsilon$-conformal configurations for $s=-1,0,1$ in left, center, and right, respectively.
Indeed, it is worth noting that these representatives give three distinct minimal surfaces in Fig~\ref{fig:repr_min_surf}. 
They attain 8-digit precision by 160 seconds and form focusing nodes, as in Fig.~\ref{fig:repr_contour}. 
Hence, we can find solutions by taking $\boldsymbol{\phi}^{(N)}(s)$. 

Second, let us choose initial configurations at random, thereby check there is no other solution than what we found above. 
Picking up points randomly and interpolating them by B-spline interpolation, we obtain 5000 samples with 5-digit precision. 
Fig.~\ref{fig:dst} and Fig.~\ref{fig:redst} show the distribution and its rearrangement of the Dirichlet energy according to the accuracy with 5-digit (left), 6-digit (center),  and 7-digit (right) precision.  

We observe that there are at least two solutions by distinguishing these samples by whether the value of the Dirichlet energy is greater than 12 or not. 
When the Dirichlet energy of a sample is greater than 12, the sample corresponds to the case $s=0$. 
Otherwise, they correspond to either the case $s=1$ or $s=-1$. 
Note that the case for $s=1,-1$ cannot be distinguished by the Dirichlet energy since the minimal surfaces for $s=1,-1$ coincide by certain rigid rotation. 
However, we can see from the shape of the minimal surface that there is no solution other than what we have found by taking random initial configurations.

\section{Concluding remarks}
\label{sec:concluding_remarks}
We propose a numerical scheme with such a high speed and high accuracy as to find all minimal surfaces spanned by a closed Jordan curve. 
The numerical scheme is based on the method of fundamental solutions (MFS) and the Nesterov's accelerated gradient decent. 
We can compute an approximate solutions for the Dirichlet boundary value problem by the MFS, and the error of the approximate solution to an exact solution decays exponentially for sufficiently smooth boundary data.
In the computation of a minimal surface, the Dirichlet boundary value problem arises to compute the coordinate function of the minimal surface. 
However, not all surfaces obtained as the solution for the boundary value problem are minimal surfaces because the solution varies as a change of variable of the boundary that gives a different boundary data.
Hence, it is necessary to compute minimal surfaces via the MFS to find a suitable change of variable that yields minimal surfaces. 
We proposed a minimization problem for discrete energy of the complex dilatation around the boundary. 
The significant characteristic is that every MFS approximate solution is smooth and harmonic on the whole domain, eliminating the need for the integration on the whole space of the functional. 
Nesterov's accelerated gradient decent solves this minimization problem. 

For the proposed numerical method, we proved the existence of a solution for the minimization problem for the complex dilatation in Theorem~\ref{thm:existence_approx_surf}. 
We obtained the error estimate of the complex dilatation according to  the regularity of a given boundary. 
We next showed the existence of a subsequence of a given sequence of approximate surfaces and the limit. 
In particular, the limit became a minimal surface and the value of the Dirichlet energy convergeed to the one for the minimal surface. 
We also obtained the $L^\infty$-error estimate for the mean curvature. 

We investigated the performance of numerical computation for the proposed minimal surface. 
We saw from the computation for some boundaries that the complex dilatation satisfied a discrete version of the maximum principle. 
The errors of the complex dilatation were with 10-digit precision for at most 160 seconds for each of boundaries in our computation. 
Lastly, we proposed two methods of finding all minimal surfaces spanned by a given closed Jordan curve. 
We chose a one-parameter family of initial configurations in the first method based on a perturbed identity map on the boundary by a Fourier mode. 
Since the initial configurations changed continuously, we could find distinct minimal surfaces as a minimizer of boundary mapping or connected components of the value of the Dirichlet energy. 
In the second method, we selected the initial configuration at random. 
We can see from the distribution of the Dirichlet energy or the surface shape that there is no solution other than what we found. 
It remains for future work to determine whether the above methods cannot find some minimal surfaces or how many samples must be calculated to ensure that all minimal surfaces have been obtained.

\subsection*{Acknowledgements}

This work was supported by JSPS KAKENHI Grant Numbers JP18K13455, JP22K03425, JP21J00025.

\begin{figure}[htbp]
\begin{center}
\includegraphics[scale=0.3]{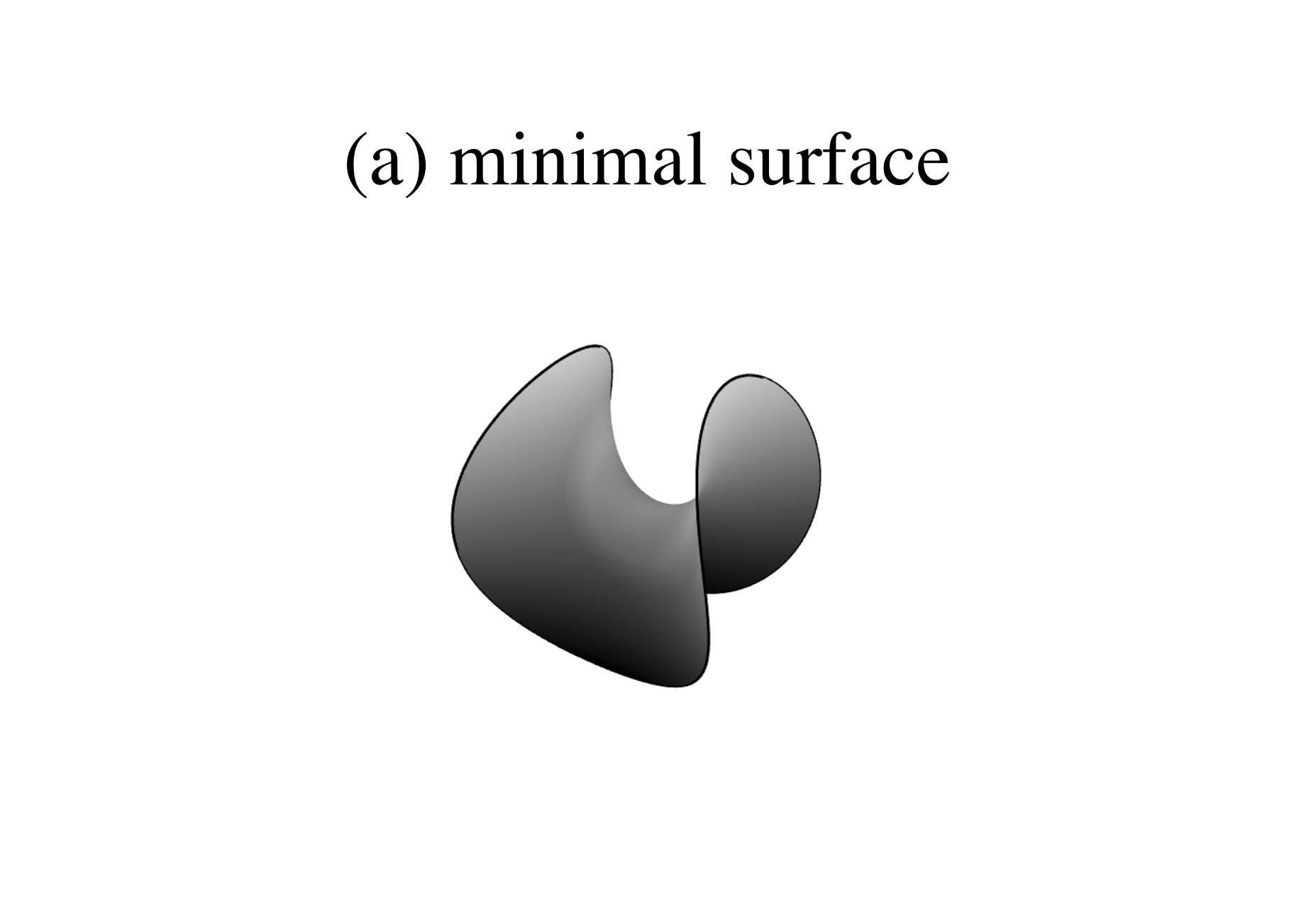}
\includegraphics[scale=0.3]{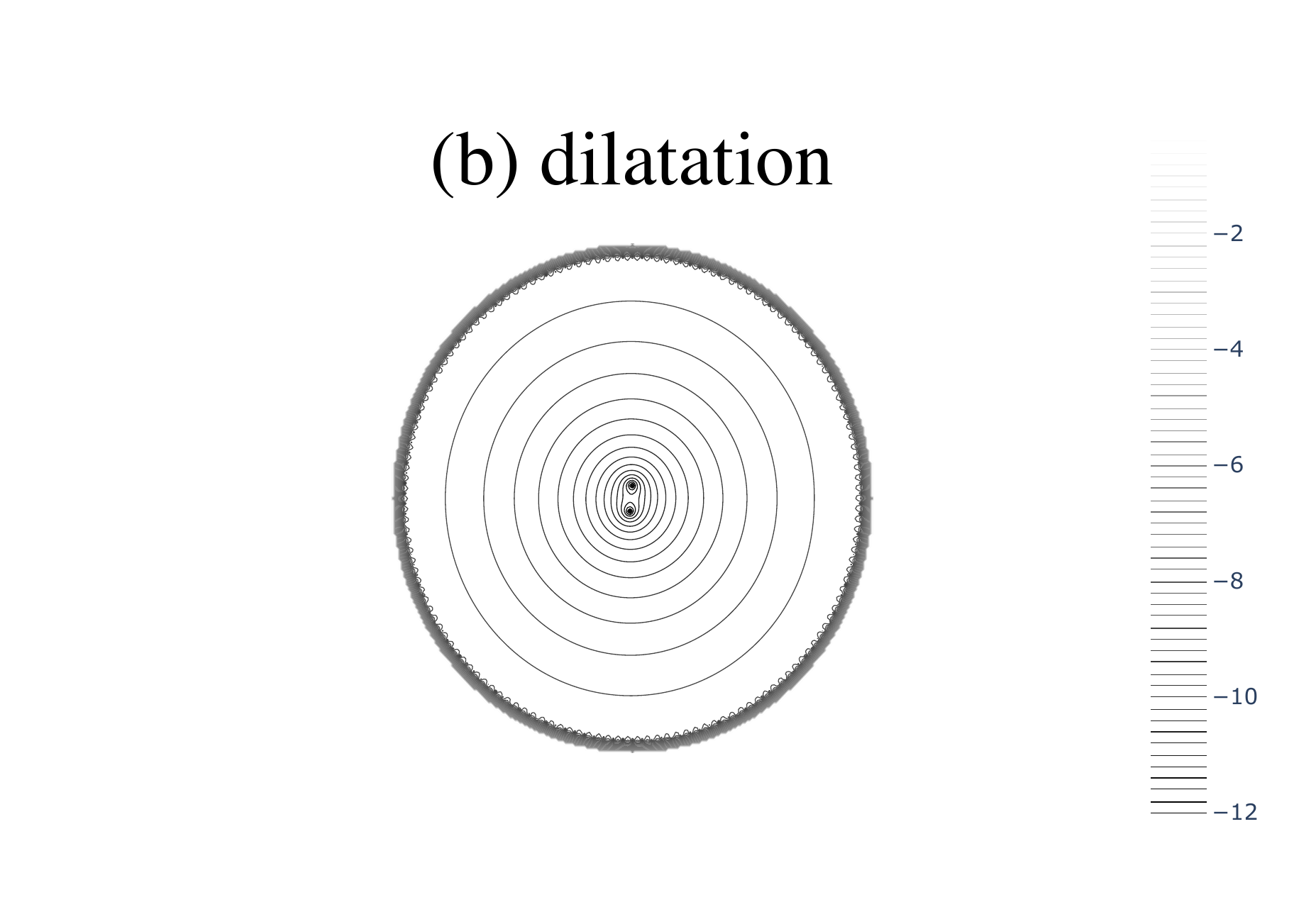}
\includegraphics[scale=0.3]{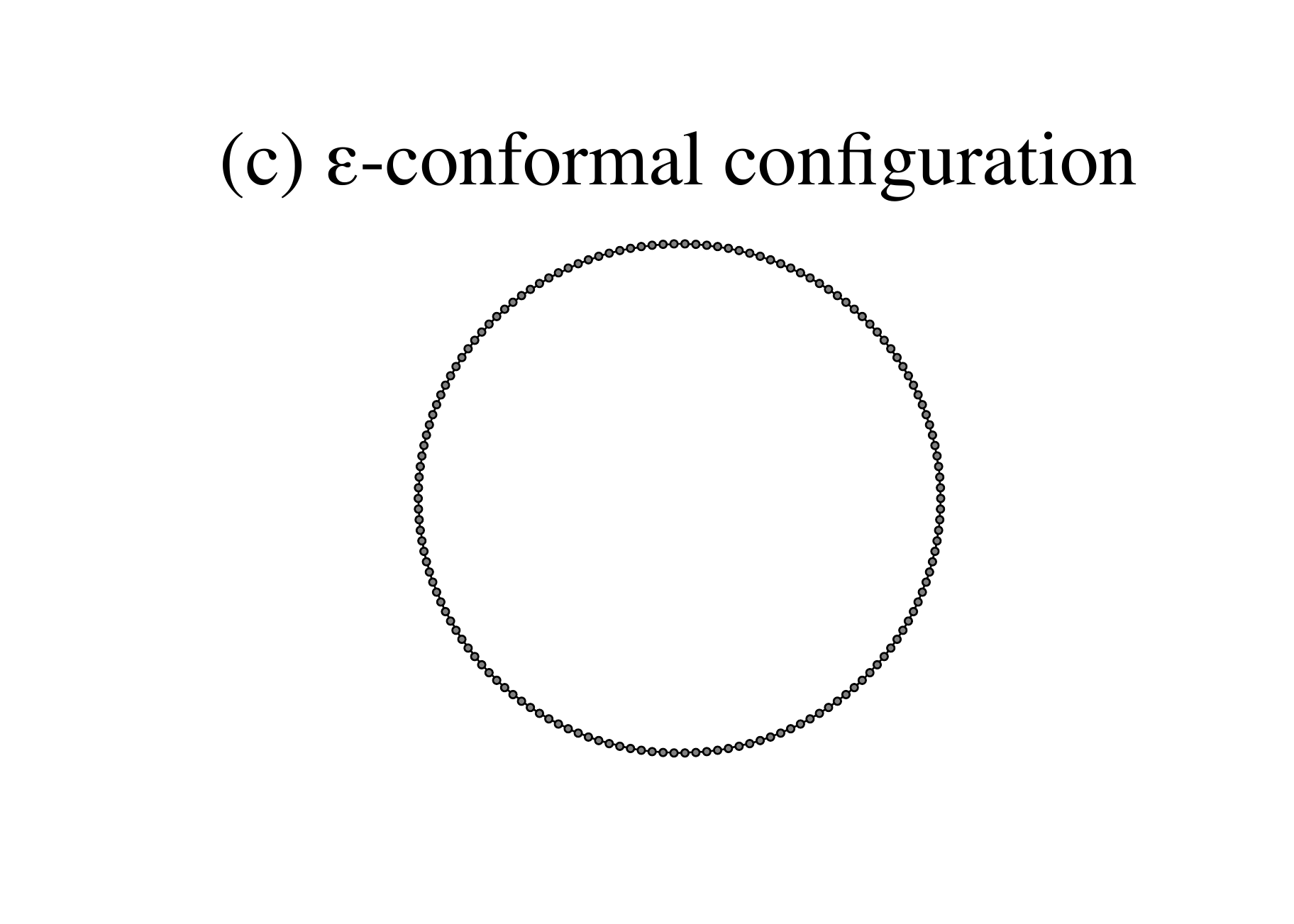}
\end{center}
\caption{Enneper surface for $R=1.1$.}
\label{fig:exact_enneper}
\end{figure}

\begin{figure}[htbp]
\begin{center}
\includegraphics[scale=0.47]{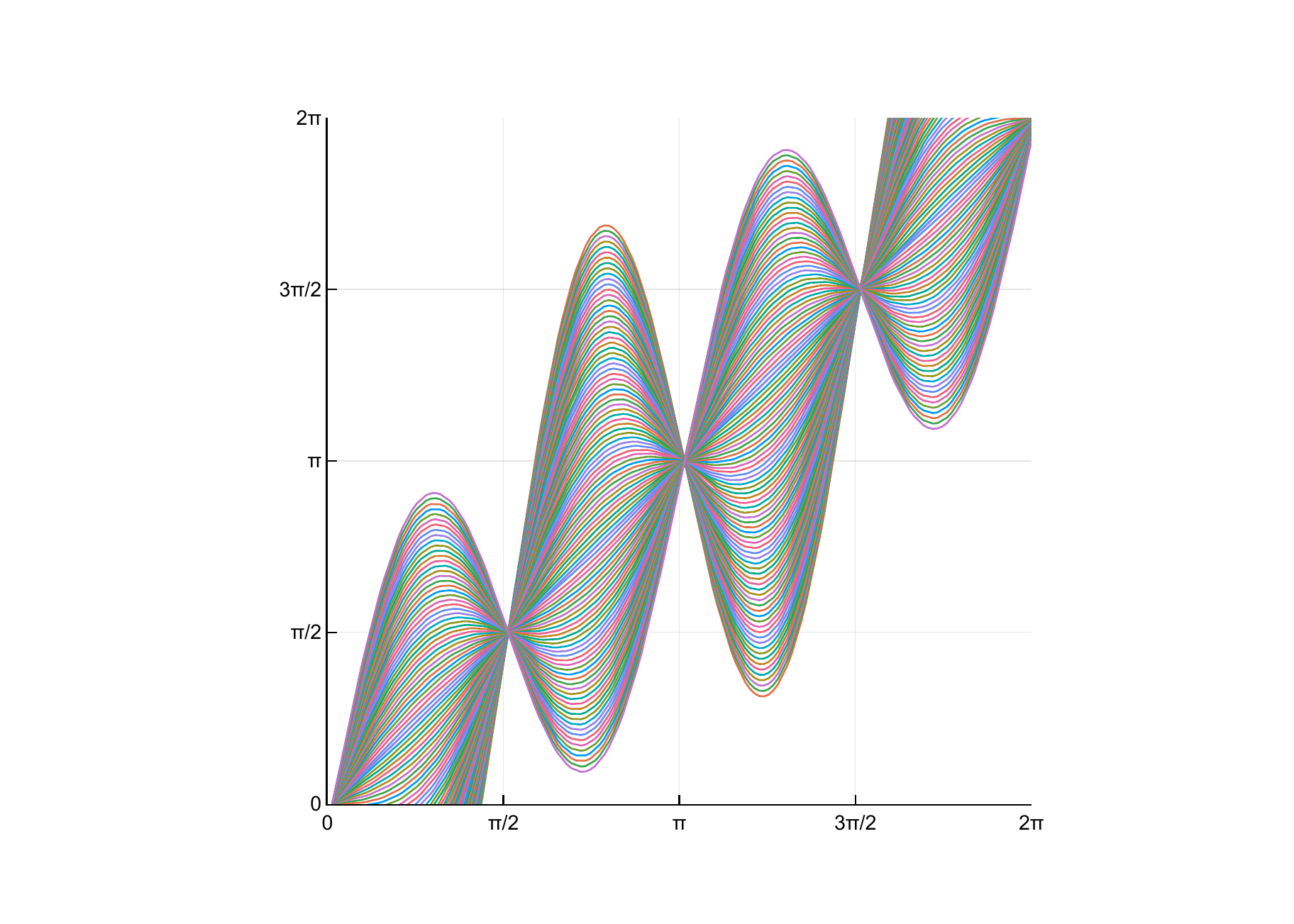}
\includegraphics[scale=0.47]{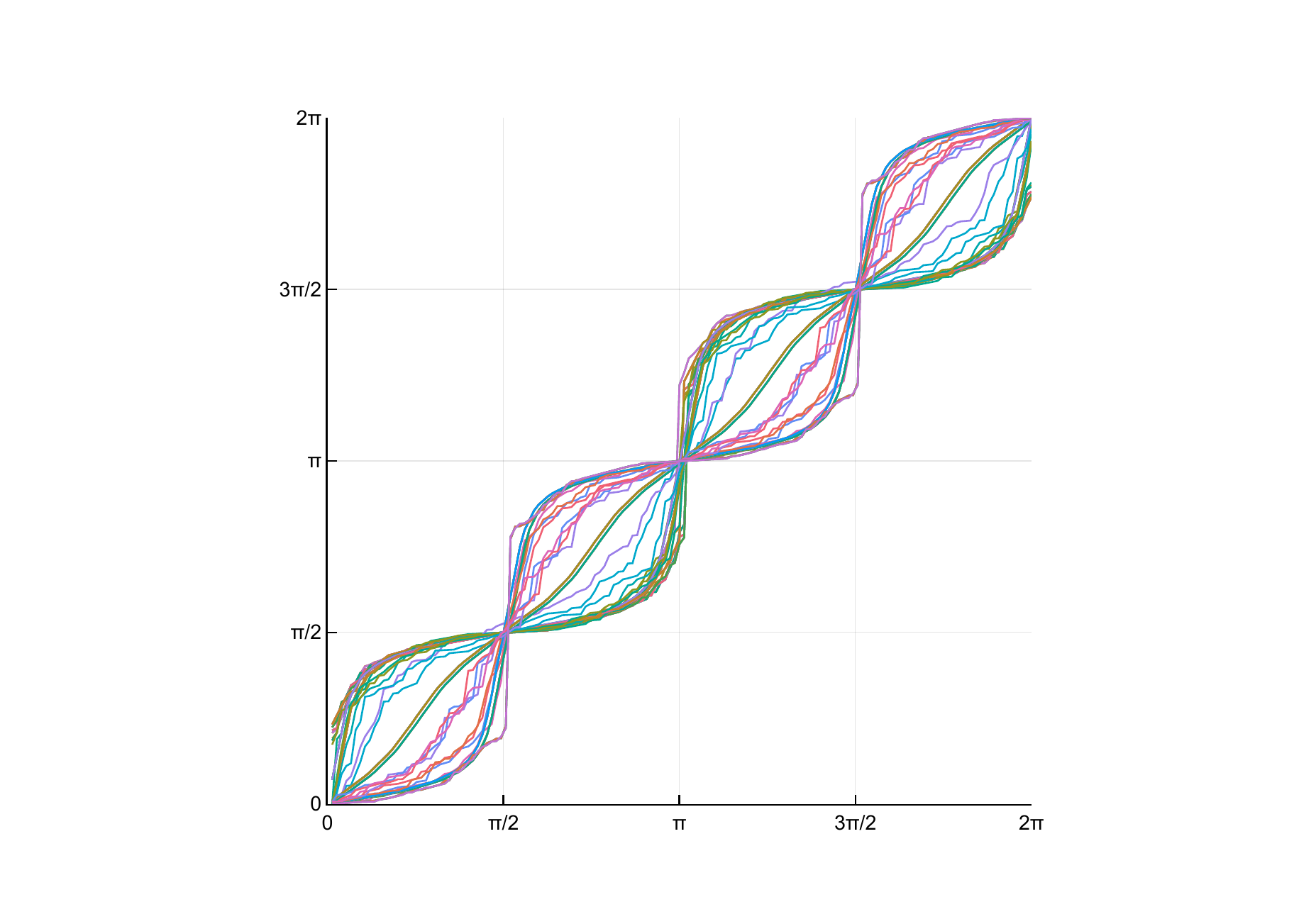}
\end{center}
\caption{$\boldsymbol{\phi}^{(N)}(s)$ (left) and $\varepsilon$-conformal configuration (right).}
\label{fig:all_ini_conf}
\end{figure}

\begin{figure}[htbp]
\begin{center}
\includegraphics[scale=0.5]{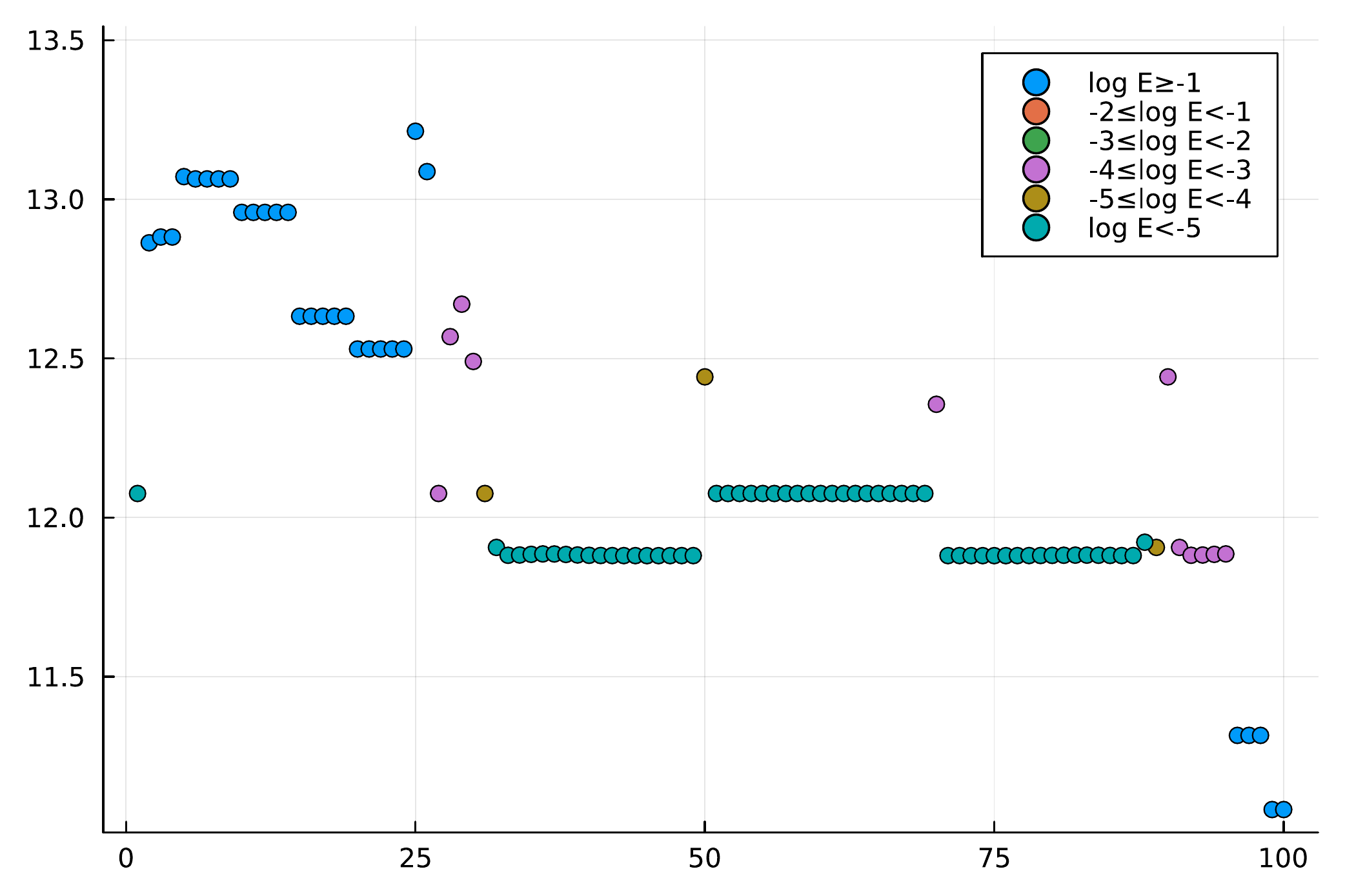}
\end{center}
\caption{Dirichlet energy.}
\label{fig:dirichlet}
\end{figure}

\begin{figure}[htbp]
\begin{center}
\includegraphics[scale=0.3]{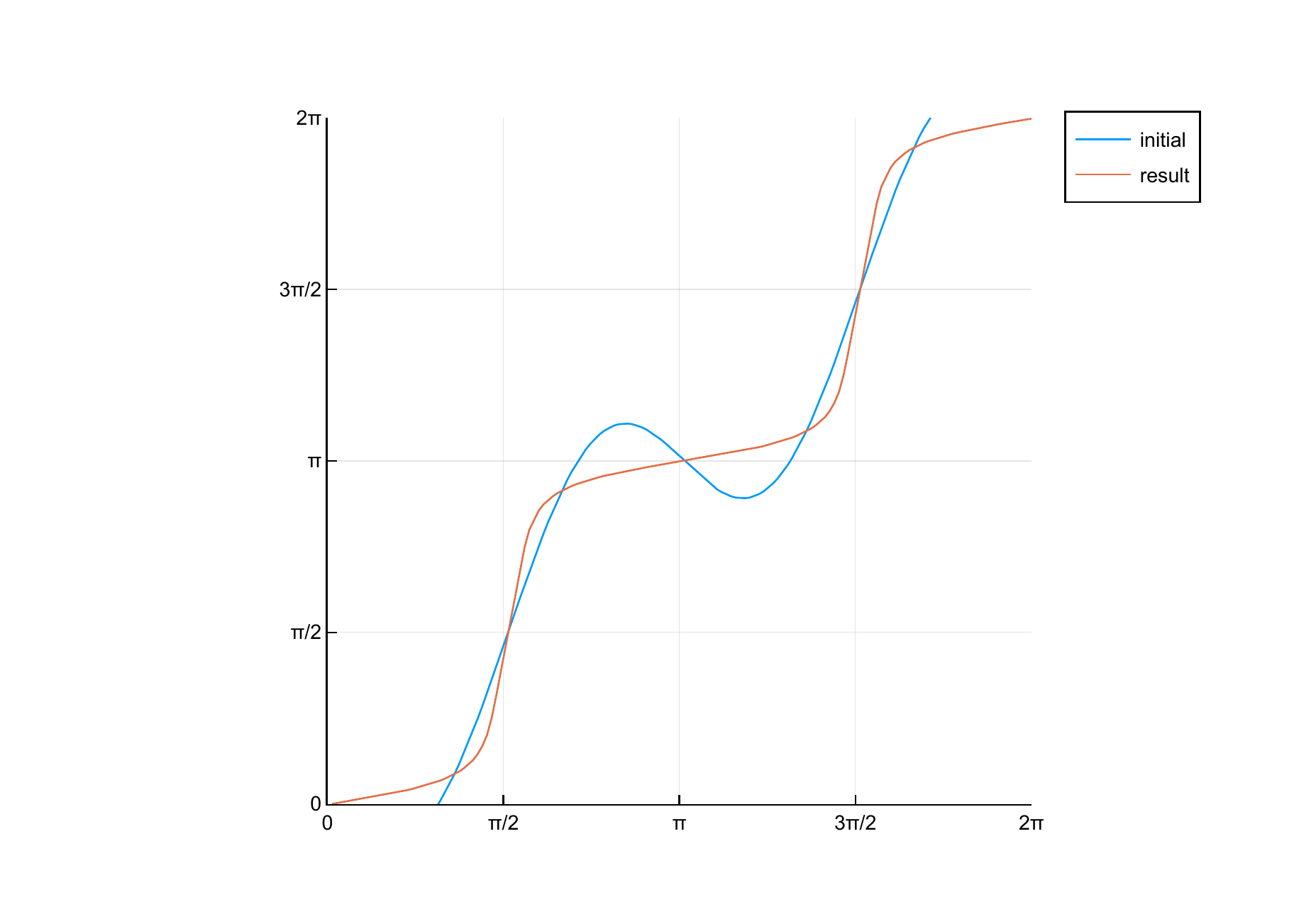}
\includegraphics[scale=0.3]{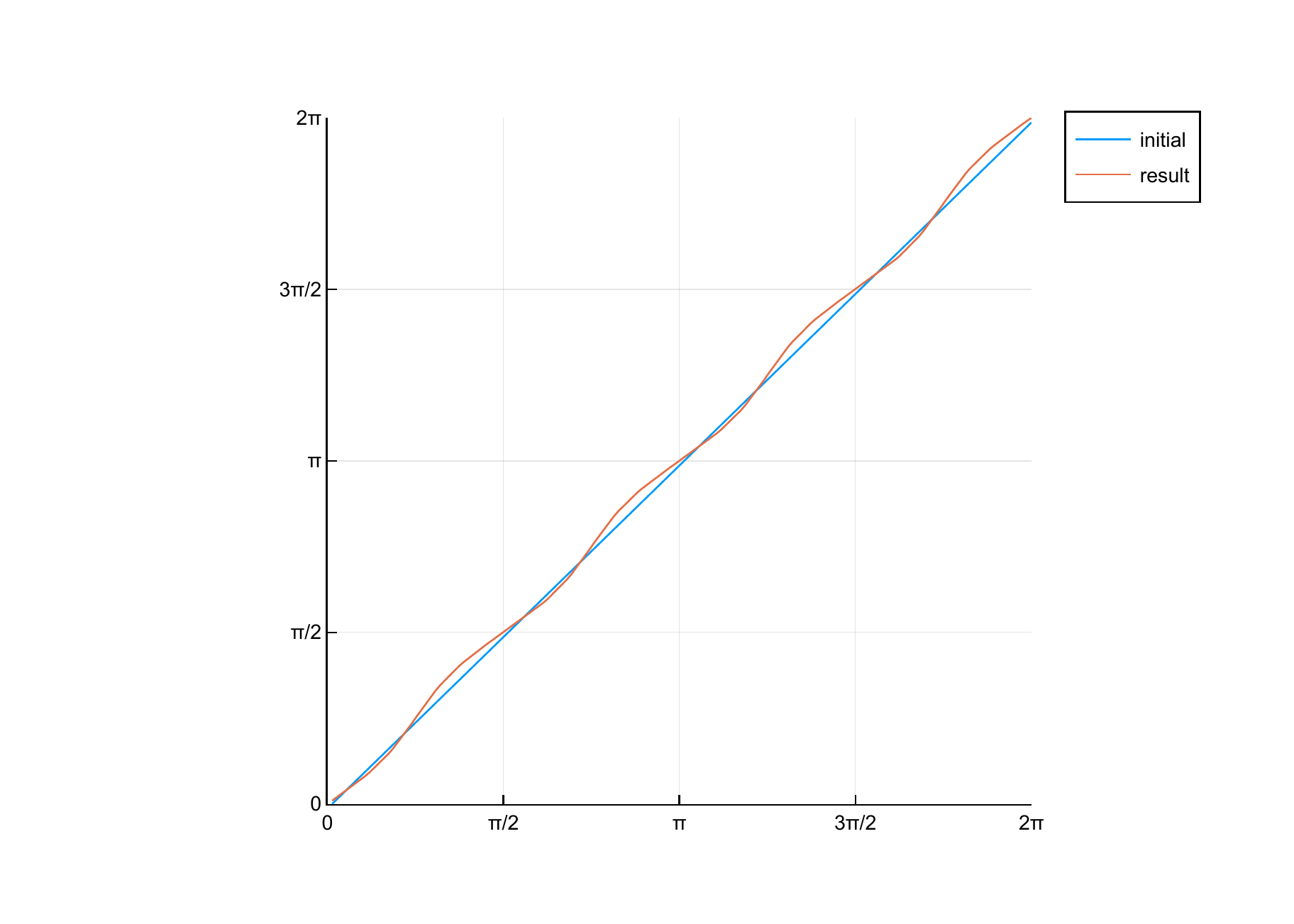}
\includegraphics[scale=0.3]{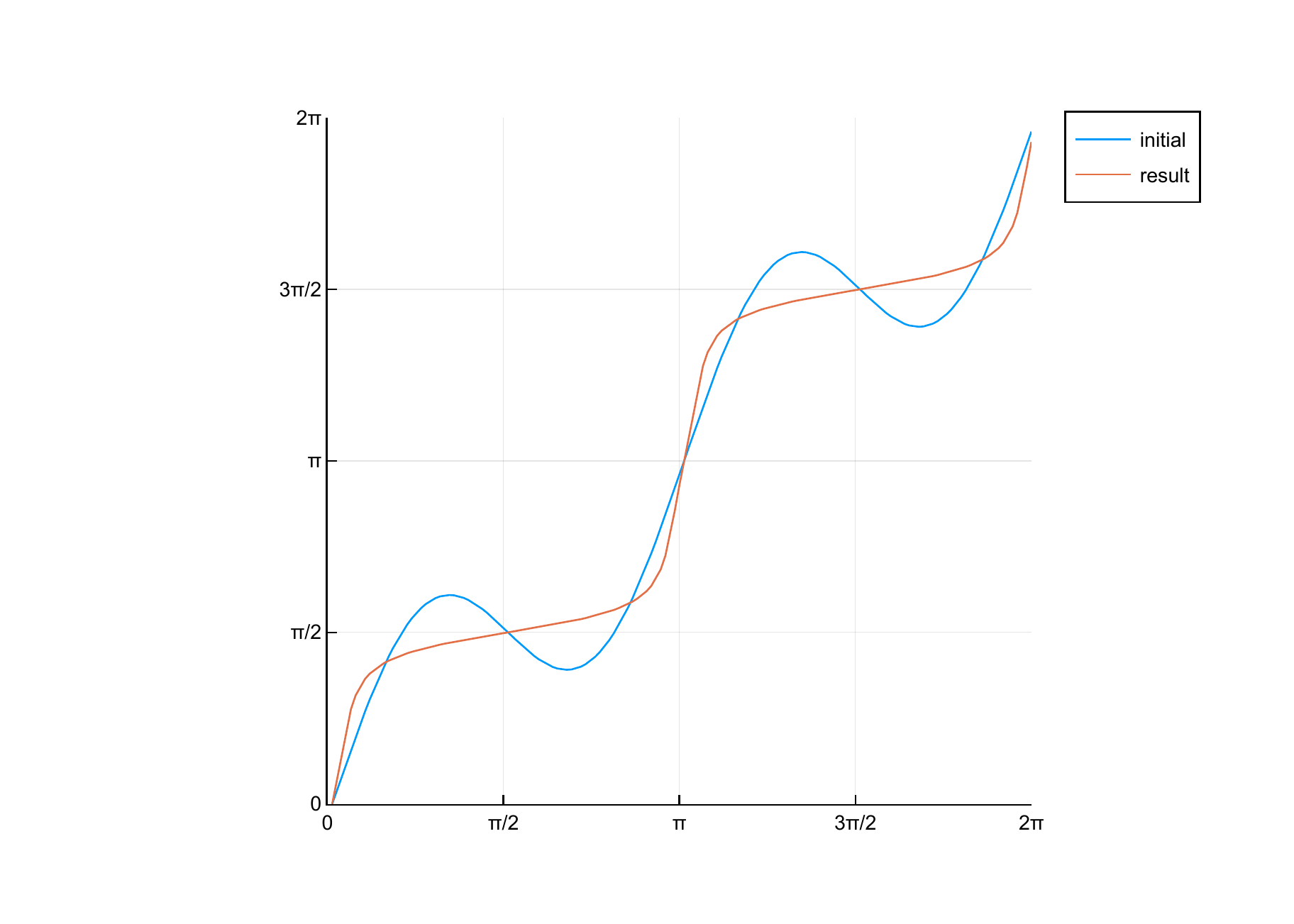}
\end{center}
\caption{Initial and $\varepsilon$-conformal configurations.}
\label{fig:repr_ini_conf}
\end{figure}

\begin{figure}[htbp]
\begin{center}
\includegraphics[scale=0.7]{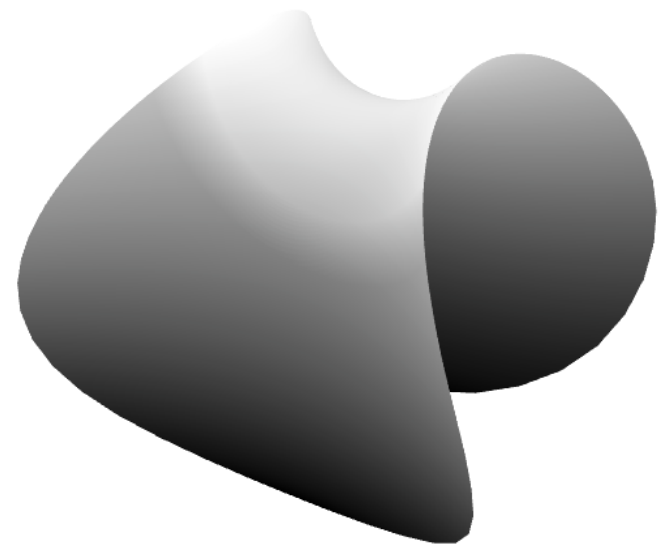}
\includegraphics[scale=0.7]{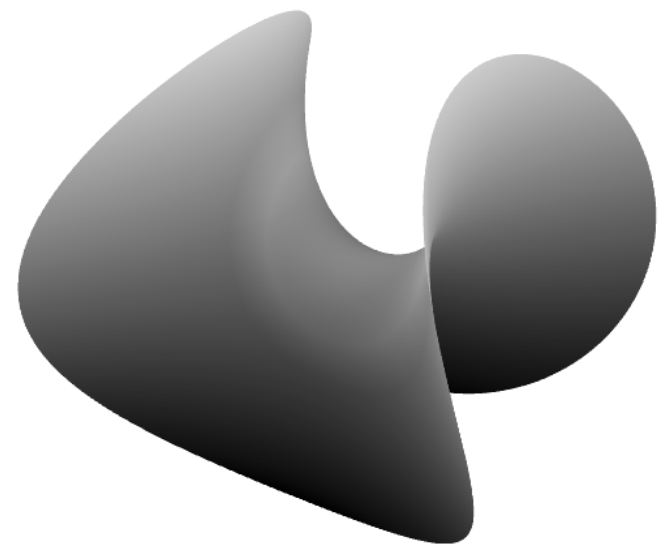}
\includegraphics[scale=0.7]{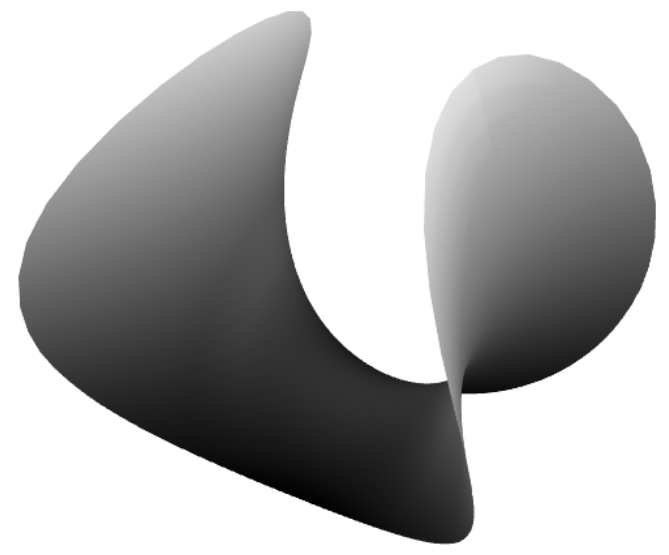}
\end{center}
\caption{Minimal surfaces.}
\label{fig:repr_min_surf}
\end{figure}

\begin{figure}[htbp]
\begin{center}
\includegraphics[scale=0.3]{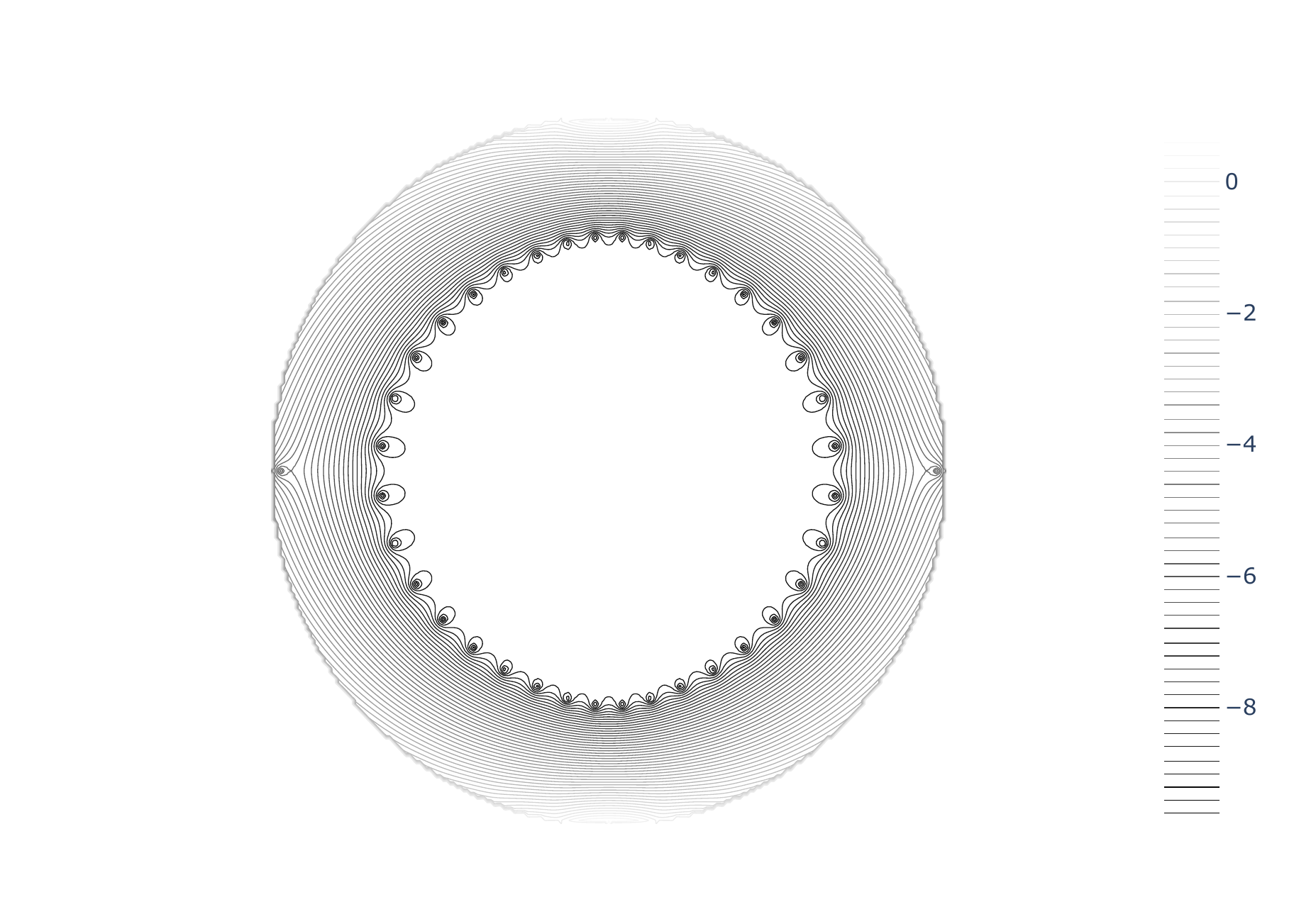}
\includegraphics[scale=0.3]{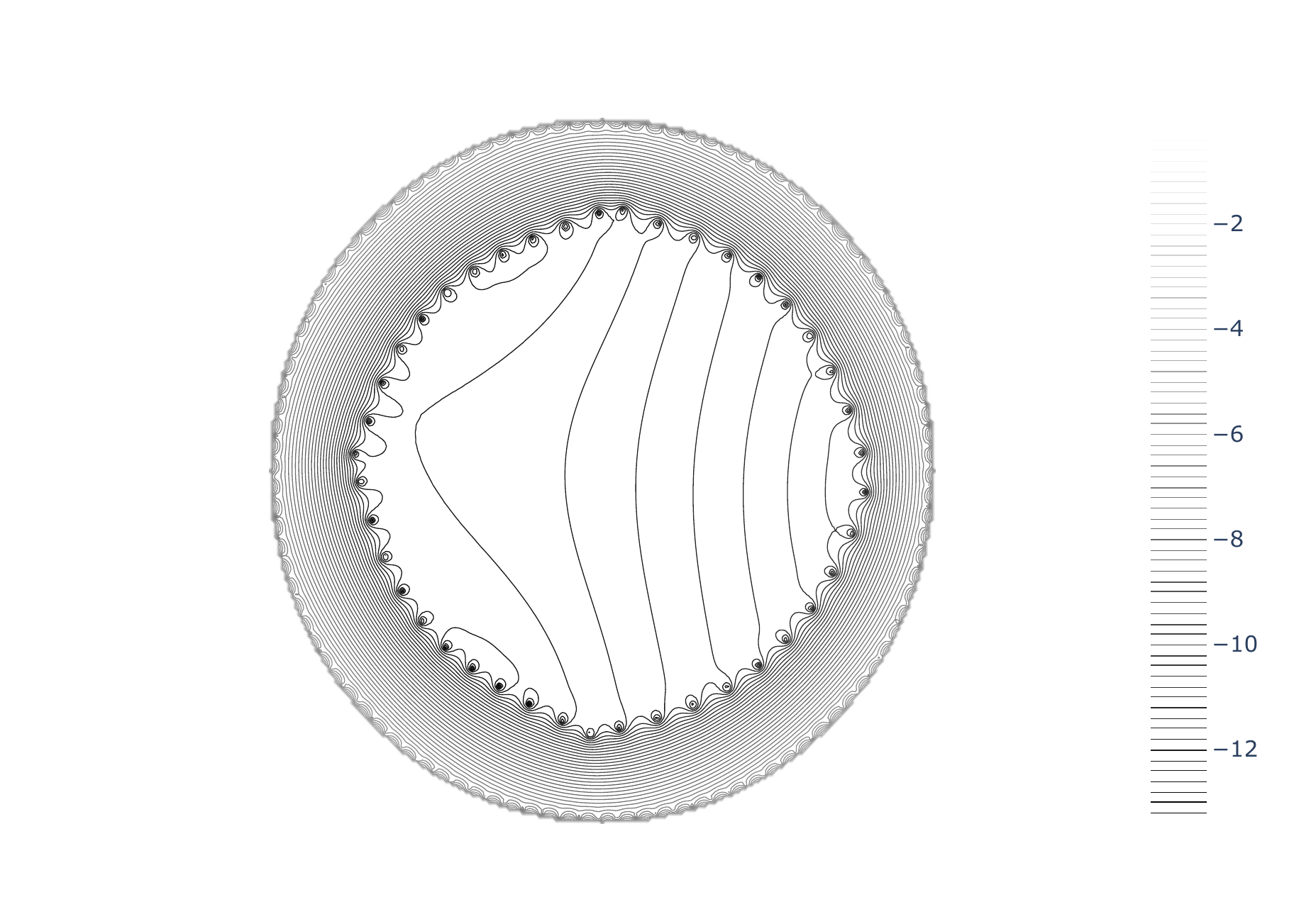}
\includegraphics[scale=0.3]{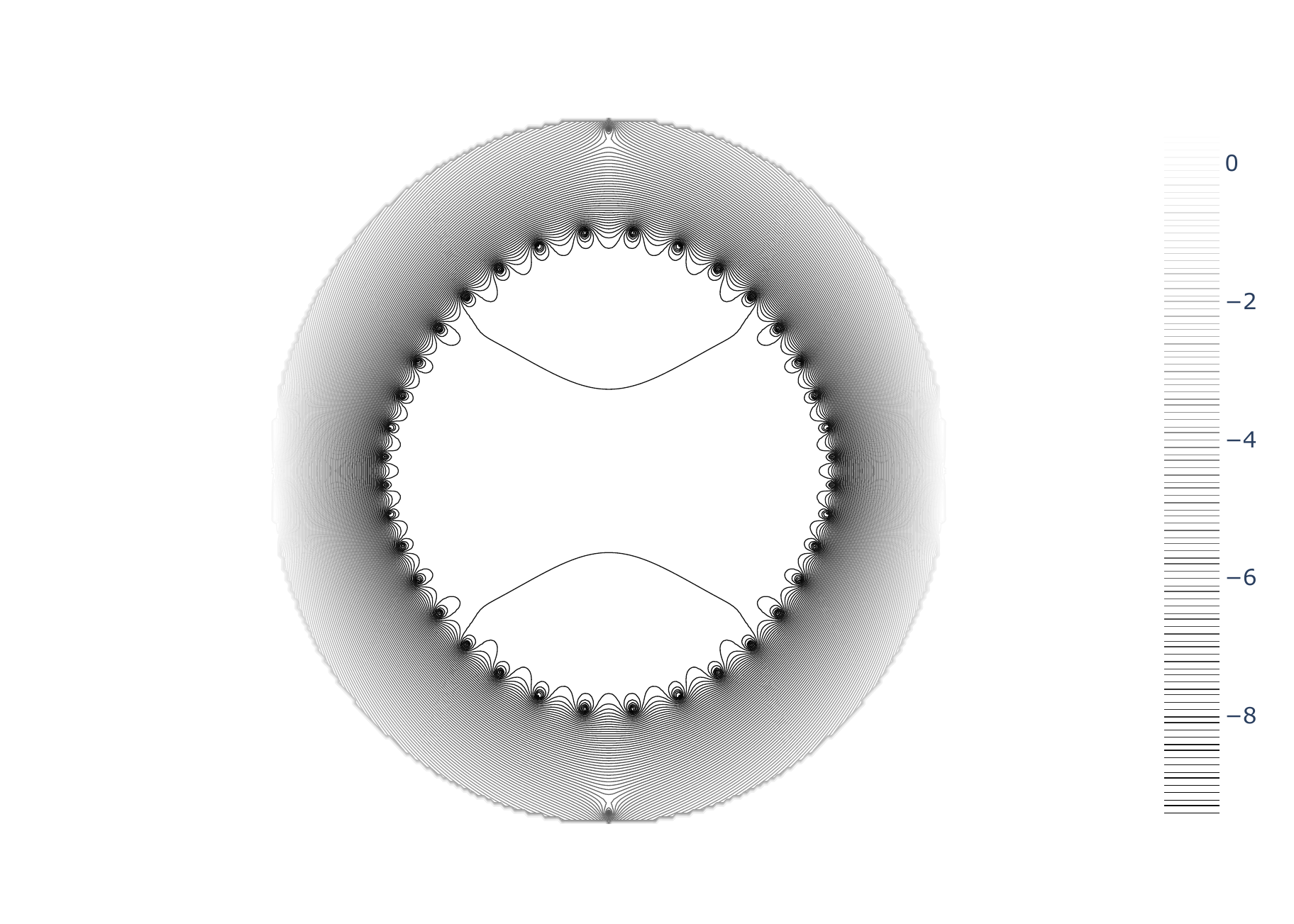}
\end{center}
\caption{Contour plots.}
\label{fig:repr_contour}
\end{figure}

\begin{figure}[htbp]
\begin{center}
\includegraphics[scale=0.25]{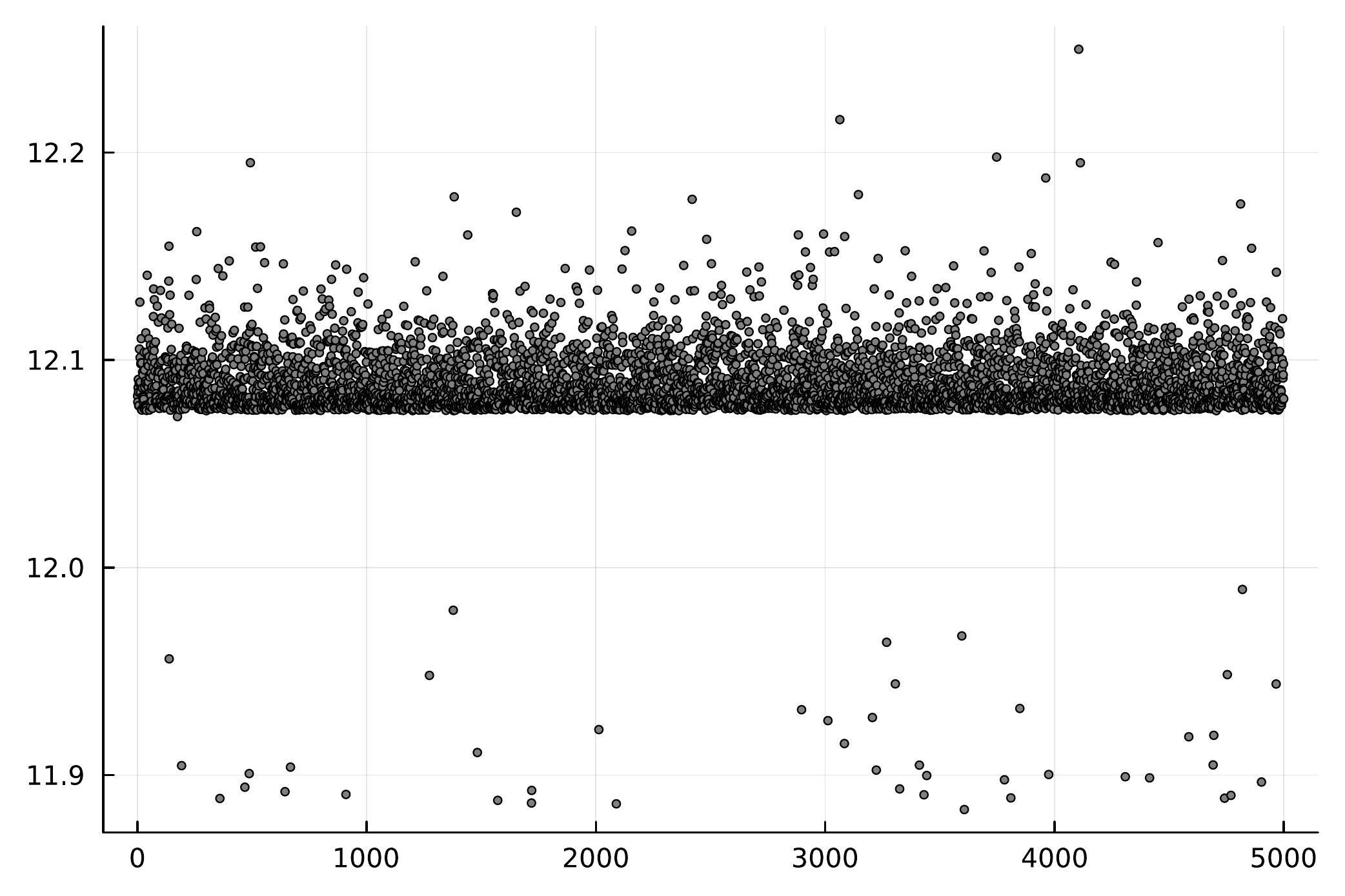}
\includegraphics[scale=0.25]{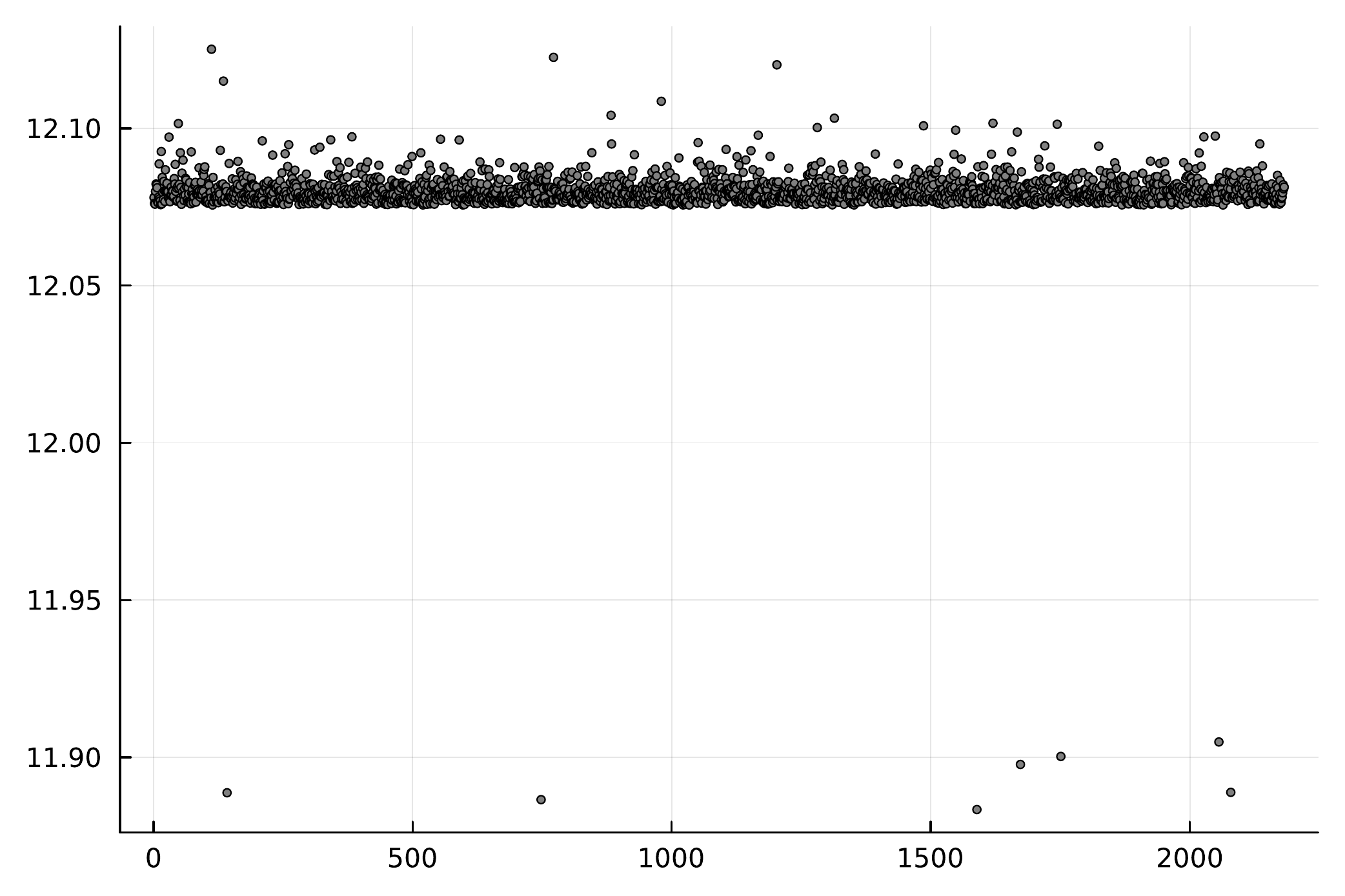}
\includegraphics[scale=0.25]{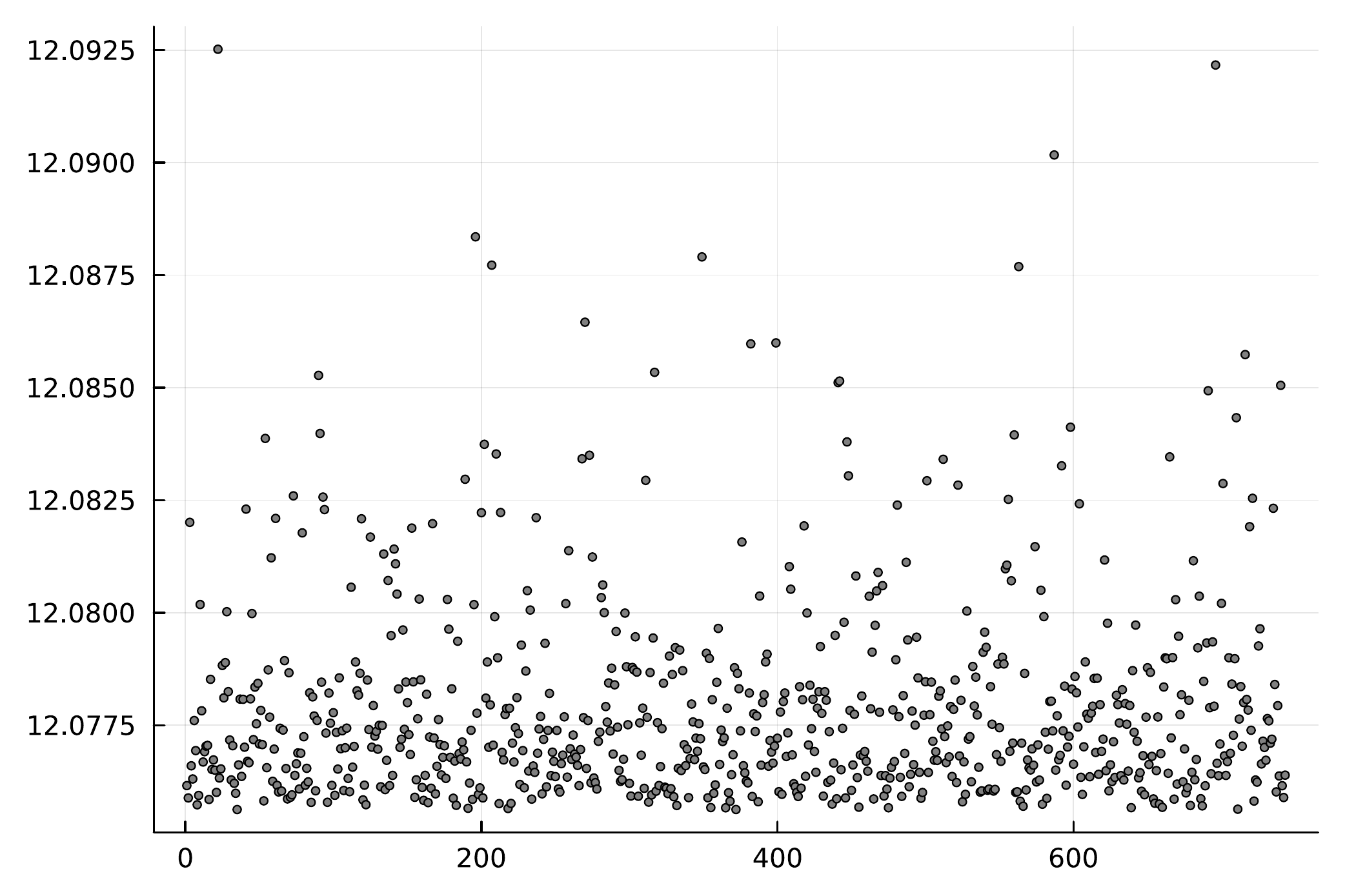}
\end{center}
\caption{Distribution of the Dirichlet energy.}
\label{fig:dst}
\end{figure}

\begin{figure}[htbp]
\begin{center}
\includegraphics[scale=0.25]{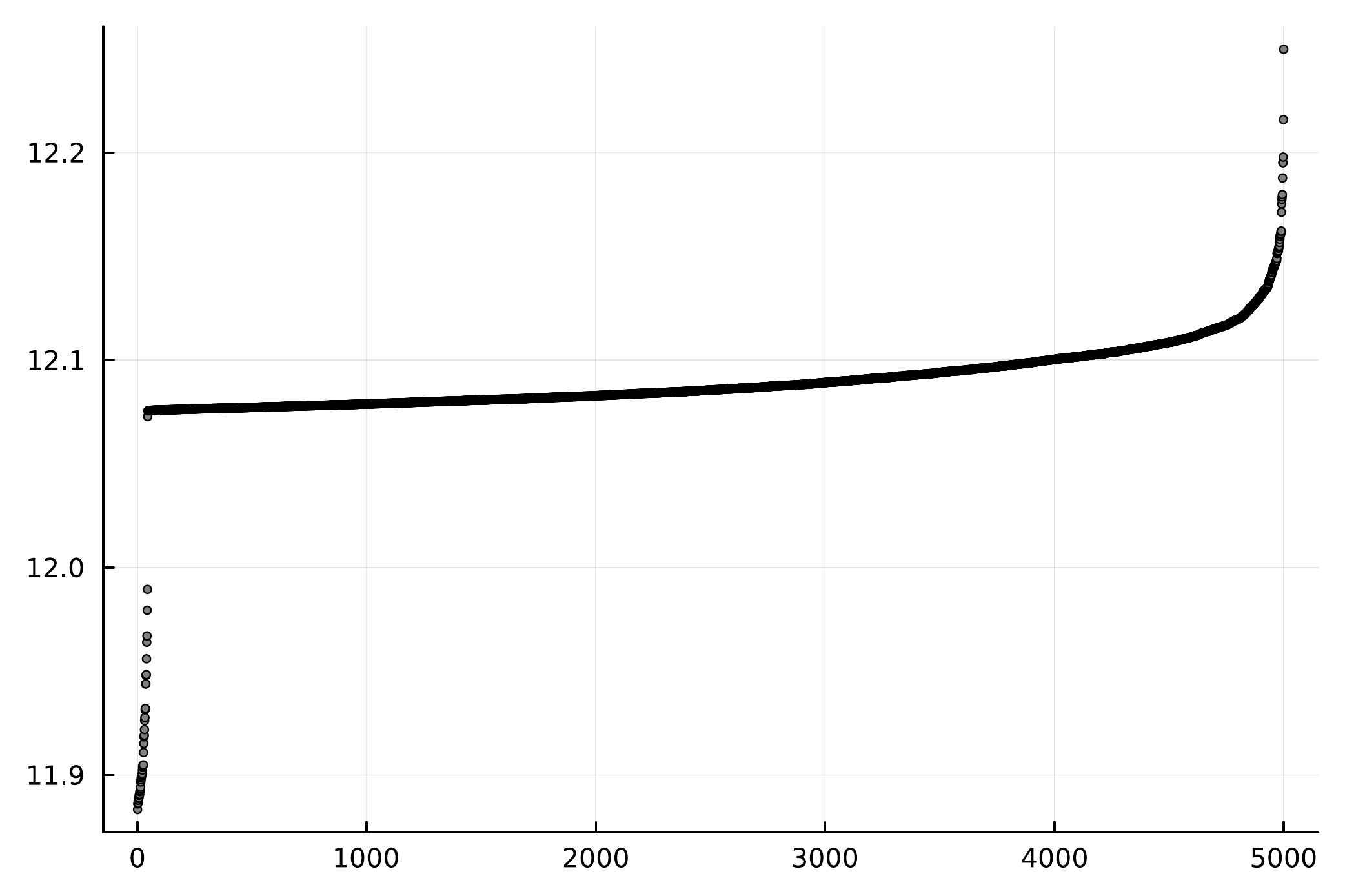}
\includegraphics[scale=0.25]{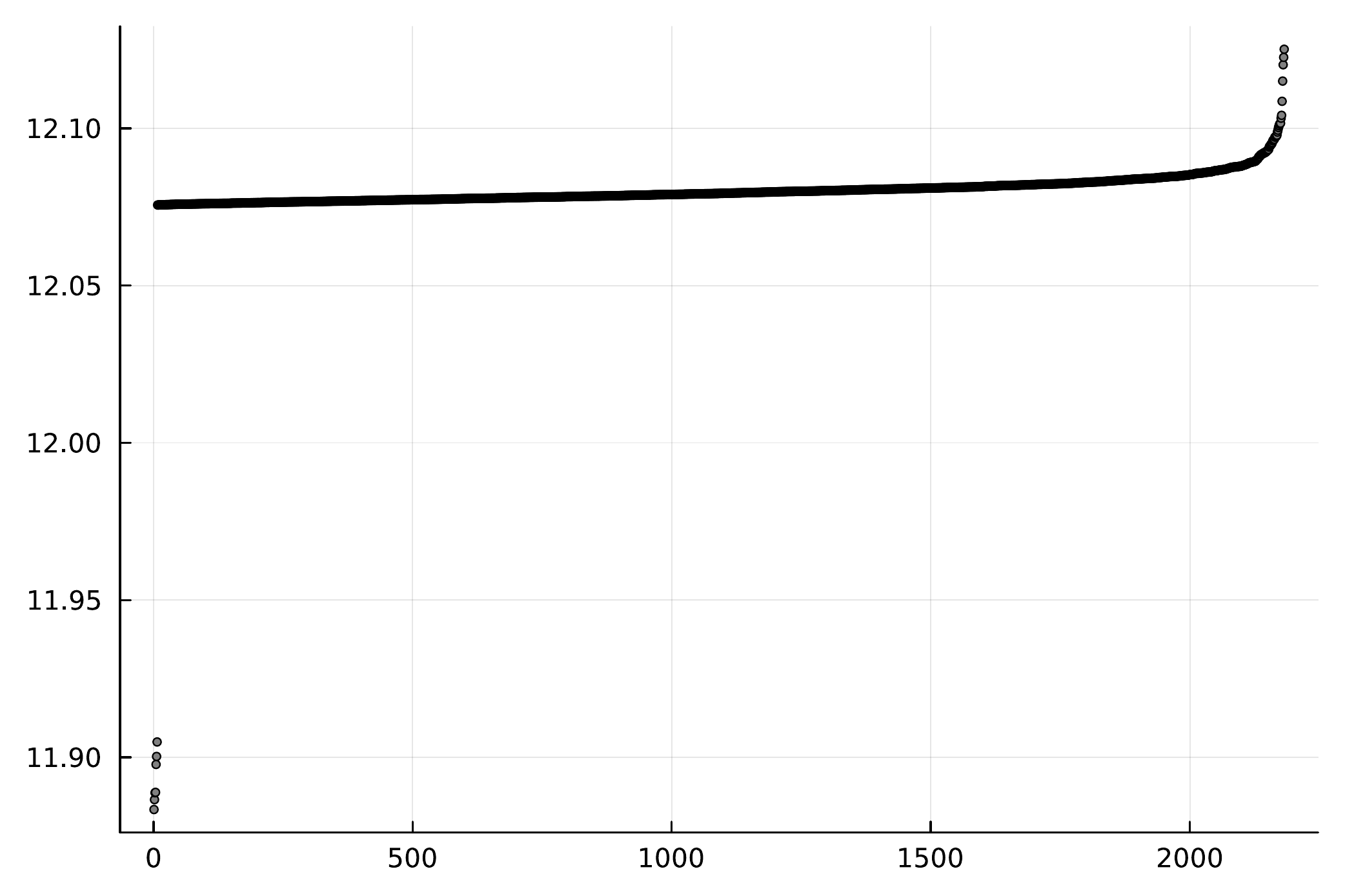}
\includegraphics[scale=0.25]{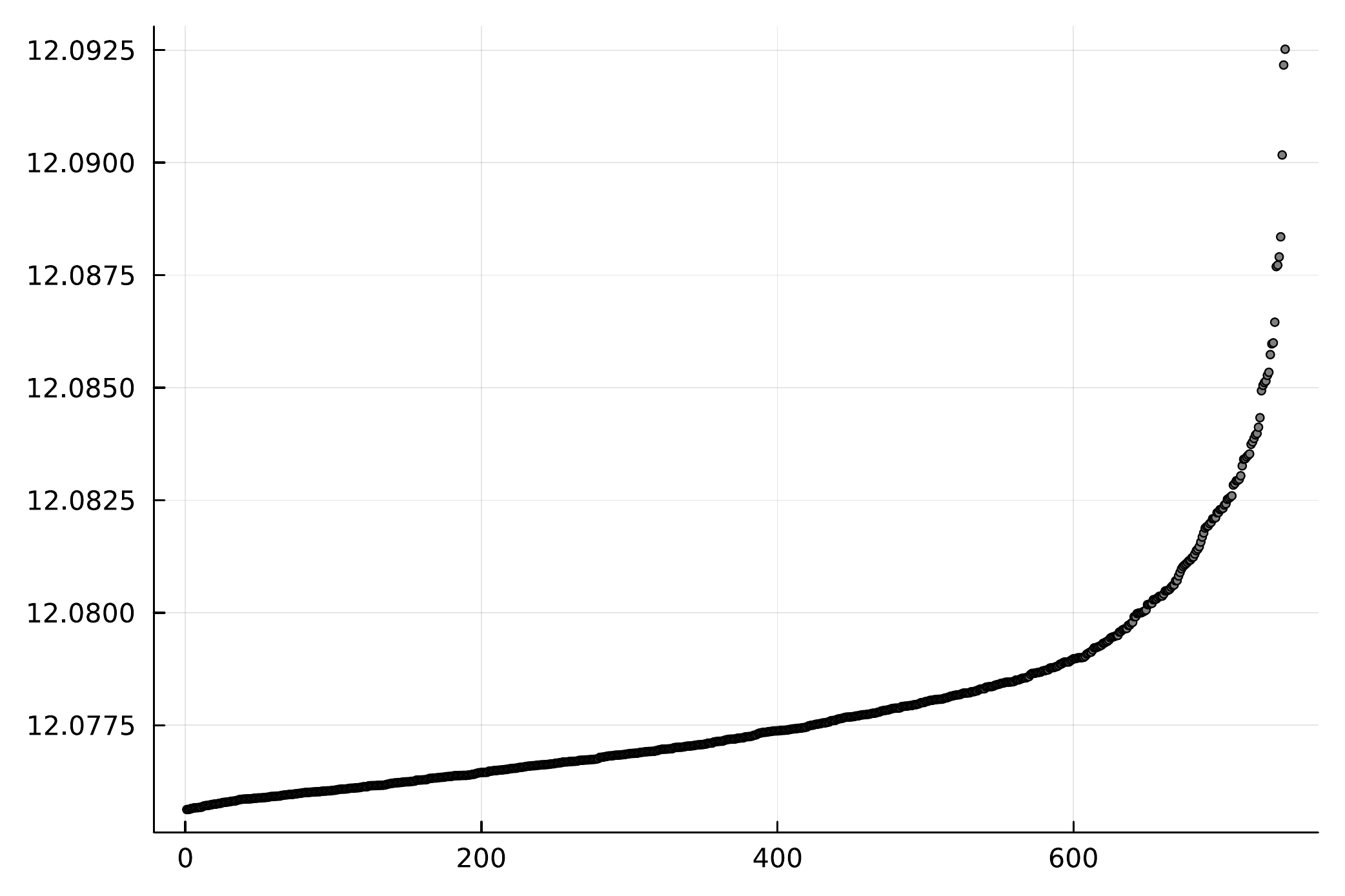}
\end{center}
\caption{Rearranged Distribution of the Dirichlet energy.}
\label{fig:redst}
\end{figure}

\bibliography{BIB2020.bib}
\bibliographystyle{siam}

\end{document}